\DeclareMathOperator{\Ind}{Ind}
\DeclareMathOperator{\Nul}{Nul}
\DeclareMathOperator{\id}{id}
\let\phi\varphi
\let\epsilon\varepsilon
\newcommand{\g}{\mathbf g}
\newcommand{\h}{\mathbf h}
\newcommand{\e}{\mathbf e}
\newcommand{\Sph}{\mathbb S}
\newcommand{\Gr}{\mathrm{Gr}}
\newcommand{\R}{\mathbb R}
\newcommand{\Z}{\mathbb Z}
\renewcommand{\C}{\mathbb C}
\renewcommand{\Re}{\mathop{\mathrm{Re}}\nolimits}
\renewcommand{\Im}{\mathop{\mathrm{Im}}\nolimits}
\title{Index of Bipolar Surfaces to Otsuki Tori}
\author{Еgor Morozov}
\date{}
\theoremstyle{plain}
\newtheorem{theorem}{Theorem}[section]
\newtheorem{proposition}[theorem]{Proposition}
\newtheorem{claim}[theorem]{Claim}
\newtheorem{conjecture}{Conjecture}
\theoremstyle{remark}
\newtheorem{remark}[theorem]{Remark}
\begin{document}

\maketitle

\begin{abstract}
For each rational number $p/q\in (1/2,\sqrt 2/2)$ one can construct an
$\Sph^1$-equivariant minimal torus in $\Sph^3$ called Otsuki torus and denoted
by $O_{p/q}$.
The Lawson's bipolar surface construction applied to $O_{p/q}$ gives a minimal
torus $\widetilde O_{p/q}$ in $\Sph^4$.
In this paper we give upper and lower bounds on the Morse index and the nullity
of these tori for $p/q$ close to $\sqrt 2/2$.
We also state a numerically assisted conjecture concerning the
general case.
\end{abstract}

\section{Introduction}

In the present paper we study the index and the nullity of $\Sph^1$-equivariant
minimal tori in $\Sph^4$ called \emph{bipolar surfaces to Otsuki tori}.
These surfaces are obtained as a result of a two-step construction.
The first step is the construction of Otsuki tori, which are
$\Sph^1$-equivariant minimal tori in $\Sph^3$.
These tori were introduced by Otsuki in~\cite{otsuki1970minimal} but
later the original definition was significantly simplified by
Penskoi~\cite{penskoi2013extremal} with the help of Hsiang-Lawson construction
of equivariant minimal surfaces~\cite{hsiang1971minimal}.
In particular, it turns out that there is a natural bijection between Otsuki
tori and rational numbers from the interval $(1/2,\sqrt 2/2)$.
Following~\cite{penskoi2013extremal}, we denote by $O_{p/q}$ the Otsuki tori
corresponding to the number $p/q\in (1/2,\sqrt 2/2)$.

The second step is the application of Lawson's bipolar surface
construction~\cite[sec.~11]{lawson1970complete} to $O_{p/q}$.
Generally, starting from a minimal immersion
$u\colon\Sigma\looparrowright\Sph^3$, this construction gives a minimal
immersion $\tilde u\colon\Sigma\looparrowright\Sph^5$.
However, for some immersions $u$ the image of $\tilde u$ is contained in an
equatorial subsphere $\Sph^4\subset\Sph^5$ and this is the case for Otsuki tori.
Thus, we end up with a family of minimal surfaces in $\Sph^4$, which are
naturally called bipolar surfaces to Otsuki tori.
These surfaces are extensively studied in~\cite{karpukhin2014spectral}.
Following this paper, we denote by $\widetilde O_{p/q}$ the bipolar surface to
$O_{p/q}$, that is, the image of the corresponding bipolar immersion $\tilde u$.

All the surfaces $\widetilde O_{p/q}$ are tori as well.
However, if $q$ is even, then the corresponding bipolar immersion $\tilde u$
covers its own image twice, and this turns out to be very important in the
sequel.
In particular, it is convenient to introduce the surface $\widehat O_{p/q}$,
which is the two-sheeted cover of $\widetilde O_{p/q}$ if $q$ is even and
coincides with $\widetilde O_{p/q}$ otherwise.
In practice one makes all the computations for $\widehat O_{p/q}$ at first and
then descends on $\widetilde O_{p/q}$ if $q$ is even.

Initially the study of the surfaces $O_{p/q}$ and $\widetilde O_{p/q}$ was
motivated by spectral geometry. Recall that by a celebrated result of
Nadirashvili, El Soufi, and
Ilias~\cite{nadirashvili1996isoperimetric,elsoufi2008laplacian}, any metric $g$
induced by a minimal immersion of $\Sigma$ in a unit sphere is critical for the
functional $\bar\lambda_{N_\Sigma(2)}(g)$, where $\bar\lambda_k(g)$ is the
$k$-th normalized eigenvalue of the Laplace-Beltrami operator on $\Sigma$ and
$N_\Sigma(2)$ is the number of Laplace-Beltrami eigenvalues of $\Sigma$ less
than~2. However, it takes some work to compute $N_{O_{p/q}}(2)$ and
$N_{\widetilde O_{p/q}}(2)$. These computations are actually the main purpose of
the works~\cite{penskoi2013extremal} and~\cite{karpukhin2014spectral}. The main
result of~\cite{karpukhin2014spectral} states that
\begin{equation}\label{eq:intro-n2}
N_{\widetilde O_{p/q}}(2)=\begin{cases}
2q+4p-2,&\text{$q$ is odd;}\\
q+2p-2,&\text{$q$ is even.}
\end{cases}
\end{equation}
In fact, it is shown that $N_{\widehat O_{p/q}}(2)=2q+4p-2$ and then analyzed
which eigenfunctions descend on $\widetilde O_{p/q}$ if $q$ is even. Critical
metrics (also called extremal metrics) are studied
in~\cite{karpukhin2015spectral,penskoi2015generalized}, see also
reviews~\cite{penskoi2013metrics,penskoi2019isoperimetric}.

Other important geometric quantities associated with a minimal surface are the
(Morse) index and the nullity.
The index of $O_{p/q}$ is computed in~\cite{morozov2023index}.
A general fact proved there implies that
$$
\Ind(O_{p/q})=N_{\widehat O_{p/q}}(2)=2q+4p-2.
$$
The aim of the current paper is to obtain estimates on the index and the nullity
of the surfaces $\widetilde O_{p/q}$.
Although this turns out to be a much more complicated problem, a rough upper
bound can be obtained as follows.
It follows from~\cite[Theorem~1.1]{ejiri2008comparison}
and~\cite[Proposition~1.6]{karpukhin2021index} that
\begin{equation}\label{eq:intro-indub}
\Ind\tilde O_{p/q}\leqslant 5N_{\widetilde O_{p/q}}(2)+2.
\end{equation}
Combining this with~\eqref{eq:intro-n2}, we obtain
$$
\Ind\widetilde O_{p/q}\leqslant\begin{cases}
10q+20p-8,&\text{$q$ is odd;}\\
5q+10p-8,&\text{$q$ is even.}
\end{cases}
$$
Our main result is that if $p/q$ is sufficiently close to $\sqrt 2/2$ (speaking
informally, this means that $\widetilde O_{p/q}$ is ``close to the Clifford
torus''), then one can improve the bound~\eqref{eq:intro-indub} and also obtain
a lower bound.

\begin{theorem}\label{th:intro-main}
There exists $\epsilon>0$ such that if
$\frac{\sqrt{2}}{2}-\frac{p}{q}<\epsilon$, then the following inequalities hold
\begin{align*}
6q+8p-3\leqslant&\Ind\widetilde O_{p/q}\leqslant 10q+4p-5,&&\text{$q$ is odd,}\\
3q+4p-3\leqslant&\Ind\widetilde O_{p/q}\leqslant 5q+2p-5,&&\text{$q$ is even,}\\
9\leqslant&\Nul\widetilde O_{p/q}\leqslant 13.
\end{align*}
\end{theorem}

We expect that the computation of the exact value of $\Ind(\widetilde O_{p/q})$
(at least, for all $p/q\in (1/2,\sqrt 2/2)$) is a very complicated problem.
See Remark~\ref{rem:l12-disc} for a detailed discussion of the difficulties
encountered and a conjecture regarding the general case.

The plan of the proof of Theorem~\ref{th:intro-main} is the following.
Recall that the index can be defined as the number of negative eigenvalues of
the Jacobi stability operator $L$.
First we compute the operator $L$ on $\widehat O_{p/q}$ in appropriate local
coordinates.
Then, using separation of variables, we reduce the counting of negative
eigenvalues of $L$ to the counting of negative eigenvalues of a family of matrix
Sturm-Liouville operators $S_l$, depending on a non-negative integer parameter
$l$.
For $l=0$ the matrix Sturm-Liouville problem reduces to a pair of scalar
Sturm-Liouville problems.
We then apply the technique based on the Sturm Oscillation Theorem to determine
the number of negative eigenvalues.
This is exactly the method used
in~\cite{penskoi2013extremal,karpukhin2014spectral} for
computing $N_{O_{p/q}}(2)$ and $N_{\widetilde O_{p/q}}(2)$ and previously
applied for Lawson tau-surfaces~\cite{penskoi2012extremal}.

Unfortunately, for $l>0$ the matrix Sturm-Liouville problem does not split into
scalar problems and we cannot apply Sturm Oscillation Theorem in this case.
Instead we look at the limit case $p/q=\sqrt 2/2$, which is a Clifford torus in
equatorial $\Sph^3\subset\Sph^5$.
In this case all eigenvalues can be computed explicitly and we use this to get
the desired bounds.
A price to pay is that the estimates hold only for $p/q$ sufficiently close to
$\sqrt 2/2$.
In particular, we are not able even to point out some specific $p/q$ for which
our estimates hold.
However we hope that the proposed method provides some intuition and can be
easily adopted for numerical computations.
For example, we are able to compute numerically the index of $\widetilde
O_{2/3}$ (see Conjecture~\ref{con:l12-disc}).

The paper is organised as follows.
In section~\ref{sec:notdef} we fix the notation and recall the basic
definitions.
In section~\ref{sec:bip} we describe a parametrization of~$\widehat O_{p/q}$
used throughout the paper.
In section~\ref{sec:jac} we compute the Jacobi stability operator on $\widehat
O_{p/q}$ together with some of its eigensections.
In section~\ref{sec:sep} we separate variables and introduce a family of matrix
Sturm-Liouville operators $S_l$.
In section~\ref{sec:l0} we analyse the ``easy'' case $l=0$ and in
section~\ref{sec:l12} we analyse the ``difficult'' cases $l=1,2$ and prove
Theorem~\ref{th:intro-main}.
The paper is concluded with Remark~\ref{rem:l12-disc} containing a short
discussion of the result.

\section*{Acknowledgments}

The author is partially supported by the Theoretical Physics and Mathematics
Advancement Foundation ``BASIS'' grant Leader (Math) 21-7-1-45-1 and Simons-IUM
Fellowship. An earlier version of this text was presented on the 26th
All-Russian August M\"obius Contest.

The author is grateful to A.\,Penskoi for drawing attention to the problem.
Also thanks to V.\,Medvedev and M.\,Karpukhin for useful discussions.

\section{Preliminaries}

\subsection{Notation and definitions}\label{sec:notdef}

In this section we fix the notation and recall the definition of the Morse index
and the nullity of a minimal surface.

Let $\Sigma$ be an oriented minimal surface in the unit 4-dimensional sphere
$\Sph^4$ with the standard round metric $\langle\cdot\,,\cdot\rangle$.
Throughout this paper $T\Sigma$ and $N\Sigma$ denote the tangent bundle of
$\Sigma$ and the normal bundle to $\Sigma$ in $\R^5$ respectively.
For any vector $v\in T\Sph^4$ let $v^\top$ and $v^\bot$ denote the orthogonal
projections of $v$ on $T\Sigma$ and $N\Sigma$ respectively.
The Levi-Civita connection in $\Sph^n$ is denoted by $\nabla$ and the induced
connections in $T\Sigma$ and $N\Sigma$ are denoted by $\nabla^\top$ and
$\nabla^\bot$ respectively.
The space of all smooth sections of a vector bundle $E$ is denoted by
$\Gamma(E)$.

Let $e_1,e_2$ be a local orthonormal basis in $T\Sigma$.
Then the Laplace-Beltrami operator on $\Sigma$ is given by
$$
\Delta f=\sum_{i=1}^2 ((\nabla_{e_i}^\top e_i)f-e_i(e_i f)),\quad
f\in C^\infty(\Sigma),
$$
the Laplace-Beltrami operator in the normal bundle
$\Delta^\bot\colon\Gamma(N\Sigma)\to\Gamma(N\Sigma)$ is given by
\begin{equation}\label{eq:intro-deltabot}
\Delta^\bot X=\sum_{i=1}^2 (\nabla_{\nabla_{e_i}^\top e_i}^\bot X-\nabla_{e_i}^\bot\nabla_{e_i}^\bot X),\quad
X\in\Gamma(N\Sigma),
\end{equation}
and the Simons operator $\mathcal B\colon\Gamma(N\Sigma)\to\Gamma(N\Sigma)$ is
given by
$$
\mathcal B(X)=\sum_{i=1}^2 \langle b_{ij},X\rangle b_{ij},\quad
X\in\Gamma(N\Sigma),
$$
where $b_{ij}:=B(e_i,e_j)$ and
$$
B(X,Y)=(\nabla_X Y)^\bot,\quad
X,Y\in\Gamma(T\Sigma)
$$
is the second fundamental form of $\Sigma$ in $\Sph^4$.
Finally, the Jacobi stability operator
$L\colon\Gamma(N\Sigma)\to\Gamma(N\Sigma)$ is given by
$$
L=\Delta^\bot-2-\mathcal B.
$$
It is well-known that the operator $L$ is elliptic. In particular, the spectrum
of $L$ is discrete and has the form
$$
\lambda_1\leqslant\lambda_2\leqslant\dots\leqslant\lambda_n\leqslant\dots\nearrow+\infty,
$$
where each eigenvalue is listed as many times as its multiplicity is. Then the
quantities
$$
\Ind\Sigma=\#\{\lambda_k<0\}\quad\text{and}\quad
\Nul\Sigma=\#\{\lambda_k=0\}
$$
are called respectively the (Morse) \emph{index} and the \emph{nullity} of the
minimal surface $\Sigma$.
Hereafter we adhere to the following conventions concerning the collections of
eigenvalues:
1) all collections of eigenvalues are considered as \emph{multisets}, i.e., each
eigenvalue is counted with multiplicity;
2) the range of $k$ is maximal possible (here, for instance,
$k\geqslant 1$).

\subsection{Bipolar surfaces to Otsuki tori}\label{sec:bip}

In this section we shortly describe a convenient parametrization of $\widetilde
O_{p/q}$ from~\cite{karpukhin2014spectral}.
Here we give only the most necessary definitions.
In particular, we do not describe neither Hsiang-Lawson construction nor
Lawson's bipolar surface construction.
We refer to~\cite[sec.~2.4]{karpukhin2014spectral} for the details.

Fix $b\in(-\frac{\pi}2,0)$ and define the function $t(\phi)$ on $[b,-b]$ by
$$
t(\phi):=\int_b^\phi \frac{2\pi\cos^3\psi\,d\psi}{\sqrt{\cos^4\psi-\cos^4 b}}.
$$
Note that the singularities at $\psi=\pm b$ are integrable so that $t(\phi)$ is
well-defined and increasing. Let $T=t(-b)$ and define the function $\phi(t)$ on
$[0,T]$ as the inverse of $t(\phi)$.
Then the function $\phi(t)$ satisfies
$$
\phi(0)=b,\quad
\dot\phi(0)=0,\quad
\phi(T-t)=-\phi(t).
$$
In particular, $\phi(t)$ extends to a smooth $T$-antiperiodic function on $\R$
satisfying
\begin{equation}\label{eq:ots-dotphi}
\dot\phi(t)^2=\frac{\cos^4\phi(t)-\cos^4 b}{4\pi^2\cos^6\phi(t)}.
\end{equation}
Define the function $\theta(t)$ by
\begin{equation}\label{eq:ots-dottheta}
\dot\theta(t)=\frac{\cos^2 b}{2\pi\cos^4\phi(t)},\quad
\theta(0)=0.
\end{equation}
\begin{proposition}
1) For each $b\in (-\frac{\pi}2,0)$ the image of the immersion $u_b\colon
(\R/2\pi\Z)\times\R\looparrowright\R^5$ given by
\begin{equation}\label{eq:ots-param}
u_b(\alpha,t)=
\begin{pmatrix}
\cos\alpha\cos\phi(t)\sin\theta(t)\\
\sin\alpha\cos\phi(t)\sin\theta(t)\\
\cos\alpha\cos\phi(t)\cos\theta(t)\\
\sin\alpha\cos\phi(t)\cos\theta(t)\\
\sin\phi(t)
\end{pmatrix}
\end{equation}
is a minimal surface in $\Sph^4$.

2) The map $u_b$ is periodic in $t$ if and only if the number
$$
\Xi(b):=\int_b^{-b} \frac{\cos^2 b\,d\phi}{\cos\phi\sqrt{\cos^4\phi-\cos^4 b}}
$$
is a rational multiple of $\pi$.
More precisely, if $\Xi(b)=(p/q)\pi$, where $\mathrm{gcd}(p,q)=1$, then the map
$u_b$ is $t_0$-periodic with $t_0=2qT$.
The number $p/q$ can be any rational number from the interval
$(1/2,\sqrt{2}/2)$.

3) In the conditions of 2), if $q$ is even, then the immersion $u_b$ is
invariant under the map $(\alpha,t)\mapsto (\alpha+\pi,t+\frac{t_0}2)$.
\end{proposition}

One can easily obtain the proof of this proposition from~\cite[sec.~2.4
and~2.5]{karpukhin2014spectral}.
In this paper we define $\widetilde O_{p/q}$ to be the image of $u_b$, where
$\Xi(b)=(p/q)\pi$.
Let $\widehat O_{p/q}$ be the orientable two-sheeted cover of $\widetilde
O_{p/q}$ if $q$ is even and coincide with $\widetilde O_{p/q}$ if $q$ is odd.

\section{Proof of Theorem~\ref{th:intro-main}}

\subsection{Jacobi operator on $\widehat O_{p/q}$}\label{sec:jac}

In this section we compute the Jacobi stability operator $L$ on $\widehat
O_{p/q}$ in local coordinates $\alpha,t$.
We put $u=u_b$ and suppress the argument $t$ in $\phi(t)$ and $\theta(t)$ for
simplicity.

\begin{proposition}\label{pr:jac-basis}
For any point $x\in\widehat O_{p/q}$ the following vectors form an orthonormal
basis of $T_x\R^5$,
\begin{equation}\label{eq:jac-basis}
\begin{gathered}
N=u=\begin{pmatrix}
\cos\alpha\cos\phi\sin\theta\\
\sin\alpha\cos\phi\sin\theta\\
\cos\alpha\cos\phi\cos\theta\\
\sin\alpha\cos\phi\cos\theta\\
\sin\phi
\end{pmatrix},\\
e_1=\frac{\partial_\alpha u}{\cos\phi}=\begin{pmatrix}
-\sin\alpha\sin\theta\\
\cos\alpha\sin\theta\\
-\sin\alpha\cos\theta\\
\cos\alpha\cos\theta\\
0
\end{pmatrix},
e_2=2\pi\cos\phi\,\partial_t u=2\pi\cos\phi\begin{pmatrix}
\cos\alpha(-\sin\phi\sin\theta\,\dot\phi+\cos\phi\cos\theta\,\dot\theta)\\
\sin\alpha(-\sin\phi\sin\theta\,\dot\phi+\cos\phi\cos\theta\,\dot\theta)\\
\cos\alpha(-\sin\phi\cos\theta\,\dot\phi-\cos\phi\sin\theta\,\dot\theta)\\
\sin\alpha(-\sin\phi\cos\theta\,\dot\phi-\cos\phi\sin\theta\,\dot\theta)\\
\cos\phi\,\dot\phi
\end{pmatrix},\\
n_1=\begin{pmatrix}
\sin\alpha\cos\theta\\
-\cos\alpha\cos\theta\\
-\sin\alpha\sin\theta\\
\cos\alpha\sin\theta\\
0
\end{pmatrix},
n_2=2\pi\cos\phi\begin{pmatrix}
-\cos\alpha(\cos\theta\,\dot\phi+\sin\theta\sin\phi\cos\phi\,\dot\theta)\\
-\sin\alpha(\cos\theta\,\dot\phi+\sin\theta\sin\phi\cos\phi\,\dot\theta)\\
\cos\alpha(\sin\theta\,\dot\phi-\cos\theta\sin\phi\cos\phi\,\dot\theta)\\
\sin\alpha(\sin\theta\,\dot\phi-\cos\theta\sin\phi\cos\phi\,\dot\theta)\\
\cos^2\phi\,\dot\theta
\end{pmatrix}.
\end{gathered}
\end{equation}
Moreover, $e_1,e_2$ is a basis of $T_x\widehat O_{p/q}$ and $n_1,n_2$ is a basis
of $N_x\widehat O_{p/q}$.
If $q$ is even, then both $n_1$ and $n_2$ descend on
$N\widetilde O_{p/q}$.
\end{proposition}

The proof of this proposition is a direct verification.
In the sequel, all computations are made w.r.t. the basis~\eqref{eq:jac-basis}.
The (local) section $f_1 n_1+f_2 n_2$ of $N\widehat O_{p/q}$ is denoted by
$\left[\begin{smallmatrix}f_1\\ f_2\end{smallmatrix}\right]$.

\begin{proposition}\label{pr:jac-simons}
The matrix of the Simons operator $\mathcal B\colon\Gamma(N\widehat
O_{p/q})\to\Gamma(N\widehat O_{p/q})$ in the basis $n_1,n_2$ is given by
$$
\mathcal B=\begin{bmatrix}
8\pi^2\cos^2\phi\,\dot\theta^2 & 0\\
0 & 8\pi^2\sin^2\phi\cos^2\phi\,\dot\theta^2
\end{bmatrix}.
$$
\end{proposition}

\begin{proof}
The entry $\mathcal B_{\mu\nu}$ of the matrix $\mathcal B$ equals
$\sum_{i,j=1}^2 \langle b_{ij},n_\mu\rangle\langle b_{ij},n_\nu\rangle$.
We have
$$
b_{11}=(\nabla_{e_1}e_1)^\bot=\frac{(\partial_\alpha^2 u)^\bot}{\cos^2\phi},\quad
b_{12}=b_{21}=(\nabla_{e_1}e_2)^\bot=2\pi (\partial_\alpha\partial_t u)^\bot,
$$
and $b_{22}=-b_{11}$ by the minimality of $u$. Hence,
$$
\langle b_{11},n_\mu\rangle=-\langle b_{22},n_\mu\rangle=\frac{1}{\cos^2\phi}\langle\partial_\alpha^2 u,n_\mu\rangle,\quad
\langle b_{12},n_\mu\rangle=\langle b_{21},n_\mu\rangle=2\pi\langle\partial_\alpha\partial_t u,n_\mu\rangle,\quad\mu=1,2,
$$
and the rest is a direct computation using~\eqref{eq:jac-basis}.
\end{proof}

\begin{proposition}
The operator $\Delta^\bot\colon\Gamma(N\widehat O_{p/q})\to\Gamma(N\widehat
O_{p/q})$ in the basis $n_1,n_2$ has the form
$$
\Delta^\bot\begin{bmatrix} f_1\\ f_2\end{bmatrix}=
\begin{bmatrix}
\Delta f_1+4\pi^2\dot\phi^2 f_1-\frac{4\pi\dot\phi}{\cos\phi}\partial_\alpha f_2,\\
\Delta f_2+4\pi^2\dot\phi^2 f_2+\frac{4\pi\dot\phi}{\cos\phi}\partial_\alpha f_1.
\end{bmatrix}.
$$
\end{proposition}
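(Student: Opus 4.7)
The plan is to substitute the orthonormal frames from Proposition~\ref{pr:basis} directly into formula~\eqref{eq:delta-NM}. Since $N\tilde O_{p/q}$ has rank two and the induced normal connection is metric-compatible, it is completely encoded by two scalar functions $\omega_i := \langle\nabla^{N\tilde O_{p/q}}_{e_i} n_1, n_2\rangle$ for $i=1,2$, and then $\nabla^{N\tilde O_{p/q}}_{e_i} n_1 = \omega_i n_2$, $\nabla^{N\tilde O_{p/q}}_{e_i} n_2 = -\omega_i n_1$. Computing these two coefficients will be the main preparatory step.

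For that, I would exploit that the outward unit normal $N$ to $\Sph^4$ is orthogonal to both $n_1$ and $n_2$, so the projection onto $N\tilde O_{p/q}$ of $\nabla^{T\Sph^4}_{e_i} n_j$ coincides with that of the flat derivative $\nabla^{\R^5}_{e_i} n_j$; hence $\omega_i$ reduces to the Euclidean inner product $\langle e_i(n_1), n_2\rangle$, with $e_i$ acting componentwise. Since $n_1$ depends only on $\alpha$ and $\theta$, one has $\partial n_1/\partial t = \dot\theta\,\partial n_1/\partial\theta$, and inspection of the formulas shows that $\partial n_1/\partial\theta$ equals $e_1$ in coordinates; as $e_1$ is tangent and hence orthogonal to $n_2$, this forces $\omega_2 = 0$. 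The remaining $\omega_1 = (1/\cos\varphi)\langle\partial n_1/\partial\alpha, n_2\rangle$ is a short direct calculation whose $\sin\theta\cos\theta$ cross-terms cancel, leaving $\omega_1 = -2\pi\dot\varphi$. Already at this stage $\omega_1^2+\omega_2^2 = 4\pi^2\dot\varphi^2$ accounts for the zero-order term in the statement.

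With these coefficients in hand, I then apply~\eqref{eq:delta-NM} to $V = f_1 n_1 + f_2 n_2$. By the product rule $\nabla^{N\tilde O_{p/q}}_{e_i} V = (e_i f_1 - \omega_i f_2) n_1 + (e_i f_2 + \omega_i f_1) n_2$, and iterating and regrouping produces, for the $n_1$-component,
$$
[\Delta^{N\tilde O_{p/q}}V]_1 = \Delta f_1 + (\omega_1^2+\omega_2^2) f_1 + 2\sum_{i=1}^2 \omega_i\, e_i(f_2) + \sum_{i=1}^2\Bigl(e_i\omega_i - \sum_{k=1}^2 \Gamma^k_{ii}\omega_k\Bigr) f_2,
$$
where $\Gamma^k_{ij}$ are defined by $\nabla^{T\tilde O_{p/q}}_{e_i} e_j = \sum_k \Gamma^k_{ij} e_k$ and the combination of second-order terms with the tangential-connection first-order terms has been recognized as the scalar Laplace-Beltrami $\Delta f_1$. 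Plugging in $\omega_1 = -2\pi\dot\varphi$ and $\omega_2 = 0$, using $e_1\omega_1 = 0$ (since $\omega_1$ does not depend on $\alpha$), and verifying by a one-line computation in the induced metric $\cos^2\varphi\,d\alpha^2 + (4\pi^2\cos^2\varphi)^{-1}\,dt^2$ that $\Gamma^1_{11} = \Gamma^1_{22} = 0$, the $f_2$-coefficient collapses to zero, while $2\omega_1\, e_1(f_2) = -\frac{4\pi\dot\varphi}{\cos\varphi}\frac{\partial f_2}{\partial\alpha}$ gives exactly the cross term in the Proposition. The $n_2$-component follows by the identical manipulation with opposite sign on $\omega_1$. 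No real obstacle arises beyond bookkeeping; the clean form of the formula is a direct consequence of the identification $\omega_2 = 0$, which I expect to check first.
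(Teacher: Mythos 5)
Your proposal is correct and follows essentially the same route as the paper: both are direct computations in the frame $n_1,n_2$ based on~\eqref{eq:delta-NM}, whose only real content is the normal connection data $\langle\nabla_{e_1}n_1,n_2\rangle=-2\pi\dot\varphi$, $\nabla^{N\tilde O_{p/q}}_{e_2}n_i=0$ together with the tangential terms $\nabla^{T\tilde O_{p/q}}_{e_1}e_1\propto e_2$, $\nabla^{T\tilde O_{p/q}}_{e_2}e_2=0$. The differences are organizational only --- you fold everything into one general formula in $\omega_i$ and $\Gamma^k_{ij}$ (with a slicker derivation of $\omega_2=0$ from $\partial n_1/\partial\theta=e_1$), whereas the paper first proves the product rule~\eqref{eq:delta-fn} and computes $\Delta^{N\tilde O_{p/q}}n_i$ separately --- and all your intermediate values agree with the paper's.
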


\begin{proof}
It easily follows from~\eqref{eq:intro-deltabot} that for any
$n\in\Gamma(N\widehat O_{p/q})$ and $f\in C^\infty(\widehat O_{p/q})$ we have
\begin{equation}\label{eq:jac-deltafn}
\Delta^\bot(fn)=
f\Delta^\bot n+(\Delta f)n-2\sum_{i=1}^2 (e_i f)\nabla_{e_i}^\bot n.
\end{equation}
Therefore it suffices to calculate $\Delta^\bot n_\mu$ and $\nabla_{e_i}^\bot
n_\mu$ for $i,\mu=1,2$. We have
\begin{equation}\label{eq:jac-partialen}
\partial_{e_1}n_1=\frac{1}{\cos\phi}\partial_\alpha n_1=\frac{1}{\cos\phi}\begin{pmatrix}
\cos\alpha\cos\theta\\
\sin\alpha\cos\theta\\
-\cos\alpha\sin\theta\\
-\sin\alpha\sin\theta\\
0
\end{pmatrix},\quad
\partial_{e_2}n_1=2\pi\cos\phi\,\partial_t n_1=
2\pi\cos\phi\begin{pmatrix}
-\sin\alpha\sin\theta\,\dot\theta\\
\cos\alpha\sin\theta\,\dot\theta\\
-\sin\alpha\cos\theta\,\dot\theta\\
\cos\alpha\cos\theta\,\dot\theta\\
0
\end{pmatrix}.
\end{equation}
From this it is easy to see that
$$
\langle\partial_{e_1}n_2,n_1\rangle=-\langle\partial_{e_1}n_1,n_2\rangle=2\pi\dot\phi\quad\text{and}\quad
\langle\partial_{e_i}n_\mu,n_\nu\rangle=0
\text{ for all other $i,\mu,\nu=1,2$.}
$$
Hence,
\begin{equation}\label{eq:jac-nablaen}
\begin{gathered}
\nabla_{e_1}^\bot n_1=\langle\partial_{e_1}n_1,n_2\rangle n_2=-2\pi\dot\phi\,n_2,\quad
\nabla_{e_1}^\bot n_2=\langle\partial_{e_1}n_2,n_1\rangle n_1=2\pi\dot\phi\,n_1,\\
\nabla_{e_2}^\bot n_1=\nabla_{e_2}^\bot n_2=0.
\end{gathered}
\end{equation}
Further, using~\eqref{eq:jac-basis}, we get
\begin{equation}\label{eq:jac-nablaee}
\nabla_{e_1}^\top e_1=\langle\partial_{e_1}e_1,e_2\rangle e_2=2\pi\sin\phi\,\dot\phi\,e_2,\quad
\nabla_{e_2}^\top e_2=-\langle\partial_{e_2}e_1,e_2\rangle e_1=0,
\end{equation}
and using~\eqref{eq:intro-deltabot}, we find
\begin{equation}\label{eq:jac-deltan}
\Delta^\bot n_1=-\nabla_{e_1}^\bot\nabla_{e_1}^\bot n_1=4\pi^2\dot\phi^2 n_1,\quad
\Delta^\bot n_2=-\nabla_{e_1}^\bot\nabla_{e_1}^\bot n_2=4\pi^2\dot\phi^2 n_2,
\end{equation}
where all other terms in~\eqref{eq:intro-deltabot} vanish because
of~\eqref{eq:jac-nablaen} and~\eqref{eq:jac-nablaee}.
The proposition now follows
from~\eqref{eq:jac-deltafn},~\eqref{eq:jac-nablaen},~\eqref{eq:jac-deltan}.
\end{proof}

\begin{proposition}[{\cite[Proposition~6]{karpukhin2014spectral}}]\label{pr:jac-laplace}
The Laplace-Beltrami operator on $\widehat O_{p/q}$ is given by the formula
$$
\Delta f=-\frac{1}{\cos^2\phi}\partial_\alpha^2 f-\partial_t(4\pi^2\cos^2\phi\,\partial_t f).
$$
\end{proposition}

\begin{proposition}\label{pr:jac-jac}
The Jacobi stability operator $L\colon\Gamma(N\widehat
O_{p/q})\to\Gamma(N\widehat O_{p/q})$ in the basis $n_1,n_2$ has the form
$$
L\begin{bmatrix} f_1\\ f_2\end{bmatrix}=
\begin{bmatrix}
\Delta f_1-4\pi^2\dot\phi^2 f_1-\frac{4\pi\dot\phi}{\cos\phi}\partial_\alpha f_2-2f_1-8\pi^2\cos^2\phi\,\dot\theta^2 f_1\\
\Delta f_2-4\pi^2\dot\phi^2 f_2+\frac{4\pi\dot\phi}{\cos\phi}\partial_\alpha f_1-2f_2-8\pi^2\cos^2\phi\sin^2\phi\,\dot\theta^2 f_2
\end{bmatrix}.
$$
\end{proposition}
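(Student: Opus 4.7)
My plan is to assemble the operator $J$ directly from the three pieces already computed in this subsection, using the defining formula
\[
J(V)=\Delta^{N\tilde O_{p/q}}V+\mathcal R(V)-\tilde A(V).
\]
Writing a section $V\in\Gamma(N\tilde O_{p/q})$ in the basis $n_1,n_2$ as the column $\left(\begin{smallmatrix}f_1\\ f_2\end{smallmatrix}\right)$, every summand on the right-hand side has already been expressed in matrix-valued form relative to this basis, so proving the proposition reduces to a term-by-term addition.

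The first step is to substitute the formula for $\Delta^{N\tilde O_{p/q}}\left(\begin{smallmatrix}f_1\\ f_2\end{smallmatrix}\right)$ from the immediately preceding proposition; this contributes the diagonal pieces $\Delta f_i+4\pi^2\dot\varphi^2 f_i$ together with the first-order off-diagonal couplings $\mp\frac{4\pi\dot\varphi}{\cos\varphi}\frac{\partial f_{3-i}}{\partial\alpha}$. The second step is to add $\mathcal R(V)=-2V$, using Proposition~\ref{pr:mathcal-r}, which simply produces the $-2f_i$ summands in both components. The third step is to subtract $\tilde A(V)$, using that by Proposition~\ref{pr:weingarten} the operator $\tilde A$ is diagonal with entries $8\pi^2\cos^2\varphi\,\dot\theta^2$ and $8\pi^2\sin^2\varphi\cos^2\varphi\,\dot\theta^2$; this provides the final summand in each component of $J(V)$. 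Collecting all contributions yields exactly the formula asserted in the proposition.

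There is no nontrivial obstacle here: all of the genuine differential-geometric content has been extracted in the earlier propositions (the Weingarten-type computation giving $\tilde A$, the curvature identity $\mathcal R=-2\,\mathrm{id}$ appropriate for the ambient sphere, and the expression for the normal Laplacian with its first-order mixing arising from $\nabla^{N\tilde O_{p/q}}_{e_1}n_i=\pm 2\pi\dot\varphi\,n_{3-i}$). The present statement amounts to the identity $J=\Delta^{N\tilde O_{p/q}}+\mathcal R-\tilde A$ written out coordinatewise in the basis $n_1,n_2$, so the proof is a direct mechanical check.
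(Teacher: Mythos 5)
Your proof is correct and is exactly the paper's own argument: the paper's proof is the single line that the result ``follows from Propositions~\ref{pr:weingarten}--\ref{pr:mathcal-r}'', i.e.\ precisely the term-by-term assembly $J=\Delta^{N\tilde O_{p/q}}+\mathcal R-\tilde A$ in the basis $n_1,n_2$ that you carry out. One remark: your assembly actually produces $-8\pi^2\cos^2\varphi\sin^2\varphi\,\dot\theta^2\,f_2$ (not $f_1$) as the last term of the second component, so the $f_1$ there in the stated formula is a typo in the paper, as the subsequent system~\eqref{eq:sl-eq} (where this term multiplies $h_2$) confirms.
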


\begin{proof}
This follows from Propositions~\ref{pr:jac-simons}--\ref{pr:jac-laplace}.
\end{proof}

Some eigensections of $L$ can be found from geometrical considerations.
In the sequel, the following proposition turns out to be very useful.

\begin{proposition}\label{pr:esec}
(i) The following sections of $N\widehat O_{p/q}$ are in the kernel of $L$:
\begin{gather*}
\begin{bmatrix}
\cos\phi\sin 2\theta\\ 0
\end{bmatrix},
\begin{bmatrix}
\cos\phi\cos 2\theta\\ 0
\end{bmatrix},
\begin{bmatrix}
0\\ 2\pi\cos^2\phi\,\dot\phi
\end{bmatrix},
\begin{bmatrix}
-\sin\alpha\sin\phi\cos\theta\\
2\pi\cos\alpha\cos\phi(\sin\phi\cos\theta\,\dot\phi+\sin\theta\cos\phi\,\dot\theta)
\end{bmatrix},\\
\begin{bmatrix}
\cos\alpha\sin\phi\cos\theta\\
2\pi\sin\alpha\cos\phi(\sin\phi\cos\theta\,\dot\phi+\sin\theta\cos\phi\,\dot\theta)
\end{bmatrix},
\begin{bmatrix}
-\sin\alpha\sin\phi\sin\theta\\
2\pi\cos\alpha\cos\phi(\sin\phi\sin\theta\,\dot\phi-\cos\theta\cos\phi\,\dot\theta)
\end{bmatrix},\\
\begin{bmatrix}
\cos\alpha\sin\phi\sin\theta\\
2\pi\sin\alpha\cos\phi(\sin\phi\sin\theta\,\dot\phi-\cos\theta\cos\phi\,\dot\theta)
\end{bmatrix},
\begin{bmatrix}
-\sin 2\alpha\cos\phi\\
2\pi\cos 2\alpha\cos^2\phi\,\dot\phi
\end{bmatrix},
\begin{bmatrix}
\cos 2\alpha\cos\phi\\
2\pi\sin 2\alpha\cos^2\phi\,\dot\phi
\end{bmatrix}.
\end{gather*}

(ii) The following sections of $N\widehat O_{p/q}$ are eigensections of $L$ with
eigenvalue $-2$:
\begin{gather*}
\begin{bmatrix}
-\sin\alpha\cos\theta\\
\cos\alpha(\cos\theta\,\dot\phi+\sin\theta\sin\phi\cos\phi\,\dot\theta)
\end{bmatrix},
\begin{bmatrix}
\cos\alpha\cos\theta\\
\sin\alpha(\cos\theta\,\dot\phi+\sin\theta\sin\phi\cos\phi\,\dot\theta)
\end{bmatrix},\\
\begin{bmatrix}
-\sin\alpha\sin\theta\\
\cos\alpha(\sin\theta\,\dot\phi-\cos\theta\sin\phi\cos\phi\,\dot\theta)
\end{bmatrix},
\begin{bmatrix}
\cos\alpha\sin\theta\\
\sin\alpha(\sin\theta\,\dot\phi-\cos\theta\sin\phi\cos\phi\,\dot\theta)
\end{bmatrix},
\begin{bmatrix}
0\\
\cos^3\phi\,\dot\theta
\end{bmatrix}.
\end{gather*}
\end{proposition}

\begin{proof}
1) It follows from~\cite[Lemma~5.1.7]{simons1968minimal} that the image of any
Killing vector field on $\Sph^4$ under the orthogonal projection on $N\widehat
O_{p/q}$ belongs to the kernel of $L$.
The space of Killing vector fields on $\Sph^4$ is spanned by the fields
$x^j\frac{\partial}{\partial x^i}-x^i\frac{\partial}{\partial x^j}$, where
$i,j=1,\ldots,5$.
The rest of the proof is a direct computation using~\eqref{eq:jac-basis}.

2) It follows from~\cite[Lemma~5.1.4]{simons1968minimal} that the image of any
constant vector field on $\R^5$ under the orthogonal projection on $N\widehat
O_{p/q}$ is an eigensection of $L$ with eigenvalue $-2$.
The rest of the proof is again a direct computation using~\eqref{eq:jac-basis}.
\end{proof}

\subsection{Separation of variables}\label{sec:sep}

In this section we reduce the spectral problem for $L$ to a family of matrix
Sturm-Liouville eigenvalue problems.

\begin{proposition}\label{pr:sep}
For each $l=0,1,2,\ldots$ consider the following matrix Sturm-Liouville
eigenvalue problem on $[0,t_0]$ with periodic boundary conditions
\begin{equation}\label{eq:sep-sl}
\begin{cases}
S_l\h=\lambda\h,\\
\h(0)=\h(t_0),\quad \h'(0)=\h'(t_0),
\end{cases}
\end{equation}
where $\h(t)=\left(\begin{smallmatrix}h_1(t)\\ h_2(t)\end{smallmatrix}\right)$
is a vector function and $S_l=-\partial_t(p(t)\partial_t)+A(l,t)$ is a matrix
Sturm-Liouville operator with
\begin{gather*}
p(t)=4\pi^2\cos^2\phi,\\
A(l,t)=\begin{pmatrix}
\frac{l^2}{\cos^2\phi}+4\pi^2\dot\phi^2-2-8\pi^2\cos^2\phi\,\dot\theta^2 & -\frac{4\pi l\dot\phi}{\cos\phi}\\
-\frac{4\pi l\dot\phi}{\cos\phi} & \frac{l^2}{\cos^2\phi}+4\pi^2\dot\phi^2-2-8\pi^2\sin^2\phi\cos^2\phi\,\dot\theta^2
\end{pmatrix}.
\end{gather*}
Then the $\lambda$-eigenspace of the operator $L$ has a basis consisting of the
eigensections of the form
\begin{equation}\label{eq:sep-eigensec}
\begin{bmatrix}
h_1(t)\cos l\alpha\\
h_2(t)\sin l\alpha
\end{bmatrix}\quad\text{and}\quad
\begin{bmatrix}
-h_1(t)\sin l\alpha\\
h_2(t)\cos l\alpha
\end{bmatrix},
\end{equation}
where $\h(t)=\left(\begin{smallmatrix}h_1(t)\\ h_2(t)\end{smallmatrix}\right)$
solves the problem~\eqref{eq:sep-sl} and $l\geqslant 0$ is integer.
Moreover, if $q$ is even, then one can choose this basis of eigensections in
such a way that for each eigensection the corresponding vector eigenfunction
$\h(t)$ of~\eqref{eq:sep-sl} is either $\frac{t_0}2$-periodic or
$\frac{t_0}2$-antiperiodic.
In this case, the eigensections~\eqref{eq:sep-eigensec} descend on $\widetilde
O_{p/q}$ exactly when either $\h(t)$ is $\frac{t_0}2$-periodic and $l$ is even
or $\h(t)$ is $\frac{t_0}2$-antiperiodic and $l$ is odd.
\end{proposition}

\begin{proof}
Since the operator $L$ commutes with $\partial_\alpha$, we see that the
$\lambda$-eigenspace of the operator $L$ has a basis such that each basis
eigensection has the form~\eqref{eq:sep-eigensec}, where $\h(t)$ is a vector
function and $l\geqslant 0$ is integer. Then a direct computation involving
Proposition~\ref{pr:jac-jac} shows that $\h(t)$ solves~\eqref{eq:sep-sl}.

If $q$ is even, then the coefficients of the operator $S_l$ are
$\frac{t_0}2$-periodic.
In other words, the operator $S_l$ commutes with the map $\iota\colon
\h(t)\mapsto \h\left(t+\frac{t_0}2\right)$, where $\h(t)$ is considered as a
vector function on the circle $\R/t_0\Z$.
Hence there exists a joint basis of eigenfunctions for~$S_l$ and $\iota$.
Since $\iota^2=\id$, we obtain that each $\iota$-eigenvalue is $\pm 1$, which
means that the corresponding vector eigenfunction $\h(t)$ is either
$\frac{t_0}2$-periodic or $\frac{t_0}2$-antiperiodic.
For the last claim it suffices to remark that a section
$\left[\begin{smallmatrix}f_1\\f_2\end{smallmatrix}\right]\in\Gamma(N\widehat
O_{p/q})$ descends on $\widetilde O_{p/q}$ exactly when both $f_1$ and $f_2$ are
invariant under the map $(\alpha,t)\mapsto (\alpha+\pi,t+\frac{t_0}2)$ and apply
the last claim of Proposition~\ref{pr:jac-basis}.
\end{proof}

Let
$$
\lambda_1(l)\leqslant\lambda_2(l)\leqslant\ldots\leqslant\lambda_k(l)\leqslant\ldots\nearrow+\infty
$$
be the eigenvalues of the problem~\eqref{eq:sep-sl}.
If $q$ is even, then each eigenvalue is one of two types, depending on whether
the corresponding vector eigenfunction is $\frac{t_0}2$-periodic or
$\frac{t_0}2$-antiperiodic.
We call them \emph{periodic} and \emph{antiperiodic} eigenvalues respectively.
Let
$$
\lambda_{i_1}(l)\leqslant\lambda_{i_2}(l)\leqslant\ldots\leqslant\lambda_{i_k}(l)\leqslant\ldots\nearrow+\infty
\quad\text{and}\quad
\lambda_{j_1}(l)\leqslant\lambda_{j_2}(l)\leqslant\ldots\leqslant\lambda_{j_k}(l)\leqslant\ldots\nearrow+\infty
$$
be all the periodic and antiperiodic eigenvalues of~\eqref{eq:sep-sl}
respectively. In particular,
$$
\{i_1,i_2,\ldots\}\sqcup\{j_1,j_2,\ldots\}=\mathbb N.
$$
Put $\lambda_k^+(l):=\lambda_{i_k}(l)$ and $\lambda_k^-(l):=\lambda_{j_k}(l)$
for each $k\geqslant 1$.

\begin{proposition}\label{pr:sep-ind}
If $q$ is odd, then
\begin{equation}\label{eq:sep-indqodd}
\Ind(\widetilde O_{p/q})=\#\{\lambda_k(0)<0\}+2\sum_{l=1}^\infty\#\{\lambda_k(l)<0\},\quad
\Nul(\widetilde O_{p/q})=\#\{\lambda_k(0)=0\}+2\sum_{l=1}^\infty\#\{\lambda_k(l)=0\},
\end{equation}
and if $q$ is even, then
\begin{equation}\label{eq:sep-indqeven}
\Ind(\widetilde O_{p/q})=\#\{\lambda_k^+(0)<0\}+2\sum_{l=1}^\infty\#\{\lambda_k^{(-1)^l}(l)<0\},\quad
\Nul(\widetilde O_{p/q})=\#\{\lambda_k^+(0)=0\}+2\sum_{l=1}^\infty\#\{\lambda_k^{(-1)^l}(l)=0\}.
\end{equation}
\end{proposition}

\begin{proof}
Let $q$ be odd. If $l>0$, then for each eigenfunction of~\eqref{eq:sep-sl} the
corresponding eigensections~\eqref{eq:sep-eigensec} are linearly independent.
Hence, in this case each negative (respectively, zero) eigenvalue
$\lambda_k(l)<0$ contributes~2 to $\Ind(\widetilde O_{p/q})$ (respectively,
$\Nul(\widetilde O_{p/q})$).
If $l=0$, then the matrix equation~\eqref{eq:sep-sl} separates into two scalar
Sturm-Liouville equations and each negative (respectively, zero) eigenvalue of
each of these equations contributes~1 to $\Ind(\widetilde O_{p/q})$
(respectively, $\Nul(\widetilde O_{p/q}$)).
This proves~\eqref{eq:sep-indqodd}.

Let $q$ be even. Then by the last point of Proposition~\ref{pr:sep}, if $l$ is
even, then the eigensections~\eqref{eq:sep-eigensec} both descend on $\widetilde
O_{p/q}$ exactly when the corresponding eigenvalue $\lambda_k(l)$ is periodic,
and if $l$ is odd, then the same happens exactly when the corresponding
eigenvalue is antiperiodic.
This proves~\eqref{eq:sep-indqeven}.
\end{proof}

\begin{proposition}\label{pr:sep-l3}
For $l\geqslant 3$ the inequality $\lambda_1(l)>0$ holds.
\end{proposition}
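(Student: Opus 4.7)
The plan is to use the variational characterization
$$
\lambda_1(l) = \inf_{h \not\equiv 0} \frac{\int_0^{t_0} \bigl[ p(t)(h_1'^2 + h_2'^2) + h^T Q_l(t) h \bigr]\,dt}{\int_0^{t_0} (h_1^2 + h_2^2)\,dt},
$$
where the infimum is taken over non-zero $t_0$-periodic vector-functions $h=(h_1,h_2)$. Since $p(t)=4\pi^2\cos^2\varphi>0$, the derivative term is non-negative, so it suffices to show that the symmetric $2\times2$ matrix $Q_l(t)$ is positive semi-definite for every $t$ and strictly positive definite at some point.

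I would first reduce to the case $l=3$ by monotonicity. A direct comparison gives
$$
Q_l - Q_3 = \frac{l^2-9}{\cos^2\varphi}\,I \;+\; (l-3)\begin{pmatrix} 0 & -\dfrac{4\pi\dot\varphi}{\cos\varphi} \\ -\dfrac{4\pi\dot\varphi}{\cos\varphi} & 0 \end{pmatrix},
$$
whose smallest eigenvalue equals $(l-3)\bigl[(l+3)/\cos^2\varphi - 4\pi|\dot\varphi|/\cos\varphi\bigr]$. The arc-length identity from~\S\ref{sec:bi-otsuki-def}, $4\pi^2\dot\varphi^2 + 4\pi^2\cos^2\varphi\,\dot\theta^2 = 1/\cos^2\varphi$, implies $4\pi|\dot\varphi|/\cos\varphi \leqslant 2/\cos^2\varphi$, so this eigenvalue is bounded below by $(l-3)(l+1)/\cos^2\varphi \geqslant 0$ whenever $l\geqslant 3$. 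Hence $Q_l\geqslant Q_3$ pointwise, and it is enough to verify $Q_3(t)\geqslant 0$ for every $t$ with strict inequality somewhere.

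For this, introduce $u = 1/\cos^2\varphi\geqslant 1$ and $w = 4\pi^2\cos^2\varphi\,\dot\theta^2\in[0,u]$ (the upper bound again coming from the arc-length identity). Sylvester's criterion reduces the verification to (i) $(Q_3)_{11} = 10u - 3w - 2 \geqslant 7u - 2 \geqslant 5$, which is immediate, and (ii) $\det Q_3\geqslant 0$. A short computation yields
$$
\det Q_3(u,w) = (9-6/u)\,w^2 \;-\; 2(12u - 16 + 2/u)\,w \;+\; (64u^2 - 40u + 4),
$$
a convex quadratic in $w$ for $u\geqslant 1$. At the endpoints one finds $\det Q_3|_{w=0} = 4(8u-1)(2u-1) \geqslant 28$ and $\det Q_3|_{w=u} = 7u(7u-2) \geqslant 35$, so the only remaining case is when the parabola's vertex lies inside $[0,u]$. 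Clearing denominators, non-negativity at the vertex reduces to the single-variable polynomial inequality
$$
P(u) := 108u^4 - 90u^3 - 7u^2 + 10u - 1 \geqslant 0 \quad\text{for } u\geqslant 1,
$$
which follows from $P(1)=20$ together with $P'(1)=158>0$ and $P''(u)>0$ for $u\geqslant 1$, making $P$ strictly increasing on $[1,+\infty)$. Strict positivity of $Q_3$ at any point where $\varphi = 0$ (such points exist by Proposition~\ref{pr:phi-theta-zeroes}) then upgrades the resulting non-negativity to $\lambda_1(l) > 0$.

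The main obstacle I expect is the algebraic step confirming $\det Q_3\geqslant 0$: the two natural pointwise bounds $w\leqslant u$ and $4\pi|\dot\varphi|/\cos\varphi\leqslant 2/\cos^2\varphi$ are each tight only at the turning points $\varphi=\pm\beta$ and applying them independently to the diagonal and off-diagonal entries is too crude, so one must exploit the precise identity $4\pi^2\dot\varphi^2 = 1/\cos^2\varphi - w$ that couples them. The change of variables $(u,w)$ and the eventual reduction to a polynomial in $u$ alone is what makes the verification tractable.
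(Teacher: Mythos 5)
Your proposal is correct, and I checked its computational core: the reduction $Q_l\succeq Q_3$ (the smallest eigenvalue of $Q_l-Q_3$ is indeed $(l-3)\bigl[(l+3)/\cos^2\varphi-4\pi|\dot\varphi|/\cos\varphi\bigr]\geqslant(l-3)(l+1)/\cos^2\varphi$), the entries $(Q_3)_{11}=10u-3w-2$ and $(Q_3)_{22}=(Q_3)_{11}+2w/u$, your determinant formula, the endpoint values $4(8u-1)(2u-1)$ and $7u(7u-2)$, and the quartic $P(u)=108u^4-90u^3-7u^2+10u-1$ all come out as stated. However, it takes a genuinely different route from the paper's. The paper performs the same variational reduction to pointwise positive definiteness of $Q_l$, but then never touches the determinant: since $(Q_l)_{11}\leqslant(Q_l)_{22}$ (because $\sin^2\varphi\leqslant1$), positive definiteness follows from the single scalar inequality $(Q_l)_{11}>|(Q_l)_{12}|$, and substituting the explicit geodesic relations \eqref{eq:dot-theta} and \eqref{eq:dot-phi} — that is, using not only the unit-speed identity you rely on but also the first integral $\dot\theta=\cos^2 b/(2\pi\cos^4\varphi)$ — turns this into
$$
\left(l-\sqrt{1-\frac{\cos^4 b}{\cos^4\varphi}}\right)^2>2\left(\cos^2\varphi+\frac{\cos^4 b}{\cos^4\varphi}\right),
$$
which holds for all $l\geqslant3$ simultaneously because the square root lies in $[0,1)$: the left-hand side is at least $4$ while the right-hand side is below $4$. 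So the paper's verification is a few lines and needs no monotonicity-in-$l$ lemma, whereas your argument is algebraically heavier but uses strictly less structure — only arc-length parametrization, with no reference to the parameter $b$ — so it would apply verbatim to any unit-speed curve in this metric, not just the geodesics $\gamma_b$. Two minor observations: your own estimates in fact give $\det Q_3>0$ for every $t$ (the vertex value is a positive multiple of $P(u)\geqslant20$, and the endpoints are at least $28$ and $35$), so $Q_3$ is strictly positive definite pointwise and the final step upgrading semi-definiteness via the zeros of $\varphi$ is superfluous; and your use of Sylvester's criterion with $(Q_3)_{11}>0$, $\det Q_3\geqslant0$ is legitimate here precisely because the $(1,1)$ entry is strictly positive, which you do establish.
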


\begin{proof}
By the variational characterization of eigenvalues for the
problem~\eqref{eq:sep-sl} we have
$$
\lambda_1(l)=\inf_{\h\in C^1(\R/t_0\Z,\R^2)}\frac{\int_0^{t_0} (p(t)|\h'(t)|^2+\langle \h(t),A(l,t)\h(t)\rangle)\,dt}{\int_0^{t_0} |\h(t)|^2\,dt}\geqslant
\inf_{\h\in C^1(\R/t_0\Z,\R^2)}\frac{\int_0^{t_0} \langle \h(t),A(l,t)\h(t)\rangle\,dt}{\int_0^{t_0} |\h(t)|^2\,dt},
$$
Hence, it suffices to show that if $l\geqslant 3$, then the matrix $A(l,t)$ is
positive definite for each $t$. This holds as soon as the inequality
$$
\frac{l^2}{\cos^2\phi}+4\pi^2\dot\phi^2-2-8\pi^2\cos^2\phi\,\dot\theta^2>
\left|\frac{4\pi l\dot\phi}{\cos\phi}\right|
$$
holds for each $t$.
Using~\eqref{eq:ots-dotphi} and~\eqref{eq:ots-dottheta}, one can rewrite this
inequality in the form
$$
\left(l-\sqrt{1-\frac{\cos^4 b}{\cos^4\phi}}\right)^2>
2\left(\cos^2\phi+\frac{\cos^4 b}{\cos^4\phi}\right),
$$
which holds for $l\geqslant 3$ because in this case we have
$\text{l.h.s.}\geqslant 4>\text{r.h.s.}$
\end{proof}

\subsection{Case $l=0$}\label{sec:l0}

This case is easy because the matrix problem~\eqref{eq:sep-sl} splits into two
scalar Sturm-Liouville problems
\begin{equation}\label{eq:l0-h1}
\begin{cases}
-(4\pi^2\cos^2\phi\,h_1')'+(4\pi^2\dot\phi^2-2-8\pi^2\cos^2\phi\,\dot\theta^2)h_1=\lambda h_1,\\
h_1(t_0)=h_1(0),\quad h_1'(t_0)=h_1'(0).
\end{cases}
\end{equation}
and
\begin{equation}\label{eq:l0-h2}
\begin{cases}
-(4\pi^2\cos^2\phi\,h_2')'+(4\pi^2\dot\phi^2-2-8\pi^2\sin^2\phi\cos^2\phi\,\dot\theta^2)h_2=\lambda h_2,\\
h_2(t_0)=h_2(0),\quad h_2'(t_0)=h_2'(0).
\end{cases}
\end{equation}
Our aim is the following
\begin{proposition}\label{pr:l0}
We have
$$
\#\{\lambda_k(0)<0\}=2q+4p-1,\quad
\#\{\lambda_k(0)=0\}=3.
$$
If $q$ is even, then, in addition, we have
$$
\#\{\lambda_k^+(0)<0\}=q+2p-1,\quad
\#\{\lambda_k^+(0)=0\}=3.
$$
\end{proposition}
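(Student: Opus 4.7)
My approach is to decouple the $l=0$ matrix Sturm--Liouville problem~\eqref{eq:sl-eq} into two scalar Hill problems for $h_1$ and $h_2$ (the off-diagonal entries of $Q_0$ vanish when $l=0$), locate explicit zero eigenfunctions coming from the three $l=0$ Killing-field projections of Proposition~\ref{pr:jacobi-fields}, and apply the Sturm oscillation theorem (Theorem~\ref{th:oscillation}). These three projections split as two $h_1$-type functions $\cos\varphi\sin 2\theta$, $\cos\varphi\cos 2\theta$ and one $h_2$-type function $\cos^2\varphi\,\dot\varphi$, consistent with the predicted three zero eigenvalues. Since $\cos\varphi>0$ and $2\theta$ varies monotonically over $[0,4p\pi]$, each of the first two has exactly $4p$ zeros on $[0,t_0)$; by Proposition~\ref{pr:phi-theta-zeroes}(1), $\cos^2\varphi\,\dot\varphi$ has exactly $2q$ zeros on $[0,t_0)$ and, since $\dot\varphi>0$ on $(0,T)$, only one zero on $[0,T)$ (at $t=0$).

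For $h_1$, two linearly independent eigenfunctions at eigenvalue $0$ with $4p=2(2p-1)+2$ zeros force $\lambda^{h_1}_{4p-1}=\lambda^{h_1}_{4p}=0$ as a double eigenvalue by Theorem~\ref{th:oscillation}, so $h_1$ contributes $4p-1$ negative and $2$ zero eigenvalues. For $h_2$, write $H(t):=\cos^2\varphi(t)\,\dot\varphi(t)$; this is $T$-antiperiodic (as $\dot\varphi$ is), so by Theorem~\ref{th:oscillation} applied to the $T$-antiperiodic spectrum of the $h_2$ Hill equation, $H$ equals either $\tilde h_1$ or $\tilde h_2$ there. The crucial step is to show that the smallest $T$-antiperiodic eigenvalue $\tilde\lambda_1$ is strictly negative, which pins $H$ down as $\tilde h_2$ and forces $\tilde\lambda_1<0=\tilde\lambda_2$ with both eigenvalues simple. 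For $p/q$ close enough to $\sqrt 2/2$ (equivalently $|b|$ small), I take the trial $\psi(t)=\cos(\pi t/T(b))$ and compute its Rayleigh quotient on $[0,T(b)]$. Using $T(b)=\sqrt 2\,\pi^2(1-b^2/8)+O(b^4)$ and $\varphi(t)=b\cos(\pi t/T(b))+O(b^3)$, the substitution $u=\pi t/T(b)$ sends the integral to $[0,\pi]$, and the expansions $\cos^2\varphi=1-b^2\cos^2 u+O(b^4)$ and (after combining $4\pi^2\dot\varphi^2$ with $8\pi^2\sin^2\varphi\cos^2\varphi\,\dot\theta^2$) $q_{h_2}=-2-2b^2\cos 2u+O(b^4)$, together with standard trigonometric integrals over $[0,\pi]$, yield Rayleigh quotient $-b^2+O(b^4)<0$. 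A Floquet-band analysis on $[0,t_0]$ with period $T=t_0/(2q)$ then counts the $t_0$-periodic eigenvalues: the first Bloch band $[\lambda_0^T,\tilde\lambda_1]\subset(-\infty,0)$ supplies $2q$ eigenvalues ($\lambda_0^T$ simple, $q-1$ interior doubled, $\tilde\lambda_1$ simple via nontrivial Jordan monodromy because $\tilde\lambda_1<\tilde\lambda_2$), and $\tilde\lambda_2=0$ supplies one more simple eigenvalue; so $h_2$ contributes $2q$ negative and $1$ zero eigenvalue. Summing gives $4p+2q-1$ negative and $3$ zero eigenvalues for $l=0$.

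For the even-$q$ refinement, each of the three zero eigenfunctions is $t_0/2$-periodic: when $q$ is even, $\varphi(t+qT)=\varphi(t)$ and $\theta(t+qT)=\theta(t)+p\pi$, so $\cos\varphi\sin 2\theta$, $\cos\varphi\cos 2\theta$, and $\cos^2\varphi\,\dot\varphi$ are all invariant under $t\mapsto t+t_0/2$. Claim~\ref{cl:oscillation-periodic}(2) applied with $n=2$ selects the $t_0/2$-periodic indices $\{0\}\cup\{4k-1,4k:k\geqslant 1\}$ in each scalar problem; counting these among the negative indices gives $2p-1$ from $h_1$ (namely $0$ together with pairs $(4k-1,4k)$ for $k=1,\dots,p-1$) and $q$ from $h_2$ (namely $0$ together with the pairs inside $\{0,\dots,2q-1\}$), totalling $q+2p-1$, while the three zeros remain $t_0/2$-periodic. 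The main technical obstacle is the Rayleigh-quotient calculation establishing $\tilde\lambda_1<0$: it is sensitive to the correct second-order expansion $T(b)=\sqrt 2\,\pi^2(1-b^2/8)+O(b^4)$ (a careless expansion of the integrand easily gives the wrong sign for the leading correction), and it is precisely this step that requires the hypothesis $\sqrt 2/2-p/q<\delta$ of Theorem~\ref{th:main}.
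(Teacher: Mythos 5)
Your overall architecture coincides with the paper's: decouple the $l=0$ system into the two scalar problems~(\ref{eq:l0-h1-eq},\ref{eq:l0-h1-bc}) and~(\ref{eq:l0-h2-eq},\ref{eq:l0-h2-bc}); use the two zero-eigenfunctions $\cos\varphi\cos2\theta$, $\cos\varphi\sin2\theta$ with their $4p$ zeros to pin $\lambda^{(1)}_{4p-1}=\lambda^{(1)}_{4p}=0$ via Theorem~\ref{th:oscillation}; reduce the $h_2$-problem to the $T$-antiperiodic problem~(\ref{eq:l0-th2-eq},\ref{eq:l0-th2-bc}), where the crux is to decide whether the eigenvalue $0$ (eigenfunction $\cos^2\varphi\,\dot\varphi$, one zero per period) sits at $\tilde\lambda_1^{(2)}$ or $\tilde\lambda_2^{(2)}$; then transfer the count back to the $t_0$-periodic (and, for even $q$, $t_0/2$-periodic) spectrum. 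Your Floquet-band count of $2q$ negative plus one zero eigenvalue for $h_2$ is equivalent to the paper's application of Claim~\ref{cl:oscillation-periodic}, part~1), with $n=q$, and your even-$q$ bookkeeping via Claim~\ref{cl:oscillation-periodic}, part~2), with $n=2$ matches the paper exactly. I also checked your perturbative computation: with $T(b)=\sqrt2\,\pi^2(1-b^2/8)+O(b^4)$, $\varphi=b\cos u+O(b^3)$ and the trial function $\cos(\pi t/T)$, the Rayleigh quotient is indeed $-b^2+O(b^4)$, so that step is arithmetically sound.

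The genuine defect is that Proposition~\ref{pr:l0} carries no smallness hypothesis: it is an unconditional statement about every $\tilde O_{p/q}$, and the paper proves it unconditionally. Your proof of the crucial inequality $\tilde\lambda_1^{(2)}<0$ is perturbative and valid only for $b$ close to $0$, so you establish a strictly weaker, conditional version of the statement; moreover your closing assertion that ``it is precisely this step that requires the hypothesis $\frac{\sqrt2}{2}-\frac{p}{q}<\delta$ of Theorem~\ref{th:main}'' is incorrect. The paper's Proposition~\ref{pr:tilde-lambda-ineq} avoids smallness entirely: the substitution $z=\sqrt{p(t)}\,h_2$ puts the equation in Liouville normal form, the zero-eigenfunction becomes $\zeta(t)=2\pi\cos^3\varphi\,\dot\varphi$, and comparing the Rayleigh quotient of the reflected test function $\zeta(\tfrac T2-t)$ with that of $\zeta(t)$ by the integral rearrangement inequality (Claim~\ref{cl:perm-ineq}, using that $p$ and $\zeta$ are increasing on $[0,\tfrac T2]$ and that $\zeta$ is even w.r.t.\ $t\mapsto T-t$) yields $\tilde\lambda_1^{(2)}+2<2$ for every $b\ne0$. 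In the paper, the hypothesis of Theorem~\ref{th:main} enters only through the $l=1,2$ analysis of \S\ref{sec:l1-l2} (continuity of $a_{ij}$ and $\mu_k(l)$ in $b$), not through the $l=0$ case. If you replace your expansion argument by this exact comparison, the rest of your proof goes through verbatim and gives the full proposition.
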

Let
$$
\lambda_0^{(1)}\leqslant\lambda_1^{(1)}\leqslant\ldots\leqslant\lambda_k^{(1)}\leqslant\ldots\nearrow+\infty
\quad\text{and}\quad
\lambda_0^{(2)}\leqslant\lambda_1^{(2)}\leqslant\ldots\leqslant\lambda_k^{(2)}\leqslant\ldots\nearrow+\infty
$$
be the eigenvalues of the problems~\eqref{eq:l0-h1} and~\eqref{eq:l0-h2}
respectively.
As in the paragraph after Proposition~\ref{pr:sep}, for even $q$ denote by
$\lambda_k^{(1)+}$ and $\lambda_k^{(2)+}$ the $k$-th, $k\geqslant 0$, periodic
eigenvalues of the problems~\eqref{eq:l0-h1} and~\eqref{eq:l0-h2} respectively
(that is, those eigenvalues for which the corresponding eigenfunction is
$\frac{t_0}2$-periodic).

\begin{proposition}\label{pr:l0-h1}
We have
$$
\#\{\lambda_k^{(1)}<0\}=4p-1,\quad
\#\{\lambda_k^{(1)}=0\}=2.
$$
If $q$ is even, then, in addition, we have
$$
\#\{\lambda_k^{(1)+}<0\}=2p-1,\quad
\#\{\lambda_k^{(1)+}=0\}=2.
$$
\end{proposition}

\begin{proof}
It follows from Proposition~\ref{pr:esec} that the functions $\cos\phi\cos
2\theta$ and $\cos\phi\sin 2\theta$ are the eigenfunctions of the
problem~\eqref{eq:l0-h1} with eigenvalue~0.
These functions are linearly independent and
by~\cite[Proposition~4]{karpukhin2014spectral} both have exactly $4p$ zeros on
$[0,t_0)$. The Periodic Sturm Oscillation Theorem (see, for
example,~\cite[Theorem~3.1 in Chapter~8]{coddington1995theory}) implies that the
only zero eigenvalues of the problem~\eqref{eq:l0-h1} are $\lambda_{4p-1}^{(1)}$
and $\lambda_{4p}^{(1)}$.
This proves the first part of the proposition.
The second part follows from the fact that for even $q$ both mentioned
$0$-eigenfunctions are $\frac{t_0}2$-periodic.
\end{proof}

Unfortunately, for the problem~\eqref{eq:l0-h2} Proposition~\ref{pr:esec} gives
only one $0$-eigenfunction.
This means that we cannot determine the number of this eigenvalue using only the
Periodic Sturm Oscillation Theorem.
To override this difficulty, consider the following problem on a short interval
\begin{equation}\label{eq:l0-h2short}
\begin{cases}
-(4\pi^2\cos^2\phi\,h_2')'+(4\pi^2\dot\phi^2-2-8\pi^2\sin^2\phi\cos^2\phi\,\dot\theta^2)h_2=\lambda h_1,\\
h_2(T)=-h_2(0),\quad h_2'(T)=-h_2'(0).
\end{cases}
\end{equation}
Let
$$
\lambda_1^{(2')}\leqslant\lambda_2^{(2')}\leqslant\ldots\leqslant\lambda_k^{(2')}\leqslant\ldots\nearrow+\infty
$$
be the eigenvalues of this problem.

\begin{proposition}\label{pr:l0-h2aux}
We have $\lambda_1^{(2')}<0$ and $\lambda_2^{(2')}=0$.
\end{proposition}

\begin{proof}
Consider the substitution
$$
\tilde h_2(t)=\sqrt{p(t)}h_2(t)=2\pi\cos\phi(t)h_2(t).
$$
It is well-known that this substitution transforms the first equation
of~\eqref{eq:l0-h2short} to an equation of the form
$$
-\tilde h_2''(t)+q(t)\tilde h_2(t)=\lambda w(t)\tilde h_2(t),
$$
where $w(t)=\frac{1}{p(t)}$ (this is so-called Hill differential equation).
Let us figure out what is $q(t)$.
From Proposition~\ref{pr:esec}(ii) we know that the function
$2\pi\cos^4\phi\,\dot\theta$ solves this equation with $\lambda=-2$.
But this function is a nonzero constant (see~\eqref{eq:ots-dottheta}), that
is, we actually have $q(t)=-2w(t)$.
Thus the problem~\eqref{eq:l0-h2short} becomes
\begin{equation}\label{eq:l0-h2hill}
\begin{cases}
-\tilde h_2''(t)=(\lambda+2)w(t)\tilde h_2(t),\\
\tilde h_2(T)=-\tilde h_2(0),\quad\tilde h_2'(T)=-\tilde h_2'(0).
\end{cases}
\end{equation}
We now need the following easy claim, which is a continuous version of the
'permutation inequality'.
\begin{claim}\label{cl:l0-permineq}
Let $f(t)$ and $g(t)$ be two strictly decreasing functions on $[0,\tau]$. Then
$$
\int_0^\tau f(t)g(t)dt>\int_0^\tau f(t)g(\tau-t)dt.
$$
\end{claim}

\begin{proof}
Put $f^*(t)=f(t)-f(\tau-t)$ and define $g^*$ similarly.
Then $f^*$ and $g^*$ are both positive on $[0,\tau]$ and
$$
\int_0^\tau f(t)g^*(t)dt= \int_0^{\tau/2} f^*(t)g^*(t)>0,
$$
which proves the claim.
\end{proof}

Now from Proposition~\ref{pr:esec}(i) we know that the function
$\eta(t):=2\pi\cos^3\phi\,\dot\phi$ solves the problem~\eqref{eq:l0-h2hill} with
$\lambda=0$.
By the variational characterization of the eigenvalues of the
problem~\eqref{eq:l0-h2hill} with the function $\eta(\frac{T}2-t)$ as a test
function, we find
$$
\lambda_1^{(2')}+2\leqslant
\frac{\int_0^T \eta'(\frac{T}2-t)^2 dt}{\int_0^T w(t)\eta(\frac{T}2-t)^2 dt}=
\frac{\int_0^{T/2} \eta'(t)^2 dt}{\int_0^{T/2} w(t)\eta(\frac{T}2-t)^2 dt}<
\frac{\int_0^{T/2} \eta'(t)^2 dt}{\int_0^{T/2} w(t)\eta(t)^2 dt}=
\frac{\int_0^T \eta'(t)^2 dt}{\int_0^T w(t)\eta(t)^2 dt}=2,
$$
where Claim~\ref{cl:l0-permineq} is applied with $\tau=T/2$, $f(t)=w(t)$, and
$g(t)=\eta(\frac{T}2-t)^2$.
\end{proof}

\begin{remark}
The trick with Claim~\ref{cl:l0-permineq}, used in the proof of
Proposition~\ref{pr:l0-h2aux}, can be also used to simplify the proof
in~\cite[sec.~3.4]{karpukhin2014spectral}.
\end{remark}

\begin{proposition}\label{pr:l0-h2}
We have
$$
\#\{\lambda_k^{(2)}<0\}=2q,\quad
\#\{\lambda_k^{(2)}=0\}=1.
$$
If $q$ is even, then, in addition, we have
$$
\#\{\lambda_k^{(2)+}<0\}=q,\quad
\#\{\lambda_k^{(2)+}=0\}=1.
$$
\end{proposition}

\begin{proof}
Actually, this follows from~\cite[Proposition~11]{karpukhin2014spectral}), but
let us give a proof here for the sake of completeness.
Consider the $\lambda_1^{(2')}$- and $\lambda_2^{(2')}$-eigenfunctions of the
problem~\eqref{eq:l0-h2short}.
By Periodic Sturm Oscillation Theorem, both these eigenfunctions have one zero
on $[0,T)$.
Moreover, these eigenfunctions extend by antiperiodicity to the eigenfunctions
of the problem~\eqref{eq:l0-h2} with $2q$ zeroes.
Again by Periodic Sturm Oscillation Theorem and Proposition~\ref{pr:l0-h2aux} we
obtain
$\lambda_{2q-1}^{(2)}=\lambda_1^{(2')}<0$ and
$\lambda_{2q}^{(2)}=\lambda_2^{(2')}=0$, which proves the first claim.
The second claim follows from the fact that for even $q$ both eigenfunctions
extend to $\frac{t_0}2$-periodic eigenfunctions of~\eqref{eq:l0-h2}.
\end{proof}

\begin{proof}[Proof of Proposition~\ref{pr:l0}\nopunct]
follows from Propositions~\ref{pr:l0-h1} and~\ref{pr:l0-h2}.
\end{proof}

\subsection{Cases $l=1$ and $l=2$}\label{sec:l12}

This case is much more complicated because now the matrix
problem~\ref{eq:sep-sl} does not split into scalar problems. Our aim is the
following.

\begin{proposition}\label{pr:l12}
For each $b$ sufficiently close to zero, we have
\begin{align*}
2q+2p-1\leqslant&\#\{\lambda_k(1)<0\}\leqslant 4q-2,\quad
&2\leqslant&\#\{\lambda_k(1)=0\}\leqslant 4,\\
&\#\{\lambda_k(2)<0\}=0,\quad
&&\#\{\lambda_k(2)=0\}=1.
\end{align*}
If, in addition, $q$ is even, then
\begin{align*}
q+p-1\leqslant&\#\{\lambda_k^-(1)<0\}\leqslant 2q-2,\quad
&2\leqslant&\#\{\lambda_k^-(1)=0\}\leqslant 4,\\
&\#\{\lambda_k^+(2)<0\}=0,\quad
&&\#\{\lambda_k^+(2)=0\}=1.
\end{align*}
\end{proposition}

All the coefficients of the problem~\eqref{eq:sep-sl} are either $T$-periodic or
$T$-antiperiodic (recall that $T=t_0/2q$).
Thus it makes sense to consider the following matrix Sturm-Liouville problem
with quasiperiodic boundary conditions:
\begin{equation}\label{eq:short-sl}
\begin{cases}
S_l\h=\lambda\h,\\
\h(T)=\omega J\h(0),\quad \h'(T)=\omega J\h'(0),
\end{cases}
\end{equation}
where $J=\left(\begin{smallmatrix}-1 & 0 \\ 0 & 1\end{smallmatrix}\right)$,
$\omega$ is some $2q$-th root of unity, and $\h(t)$ is now a
\emph{complex}-valued vector function.
Let
$$
\lambda_1^{[\omega]}(l)\leqslant\lambda_2^{[\omega]}(l)\leqslant\ldots\leqslant
\lambda_k^{[\omega]}(l)\leqslant\ldots\nearrow+\infty
$$
be the eigenvalues of the problem~\eqref{eq:short-sl}.

\begin{proposition}\label{pr:l12-short}
Denote $\omega_0=e^{\pi i/q}$. Then
$$
\{\lambda_k(l)\mid k\geqslant 1\}=
\bigcup_{r=0}^{2q-1}\{\lambda_k^{[\omega_0^r]}(l)\mid k\geqslant 1\},
$$
and if $q$ is even, then
$$
\{\lambda_k^+(l)\mid k\geqslant 1\}=
\bigcup_{r=0}^{q-1}\{\lambda_k^{[\omega_0^{2r}]}(l)\mid k\geqslant 1\},\quad
\{\lambda_k^-(l)\mid k\geqslant 1\}=
\bigcup_{r=0}^{q-1}\{\lambda_k^{[\omega_0^{2r+1}]}(l)\mid k\geqslant 1\},
$$
where all equalities are equalities of multisets, i.e., each eigenvalue is
counted with its multiplicity.
\end{proposition}

\begin{proof}
The argument is almost the same as in the proof of the second part of
Proposition~\ref{pr:sep}.
In this case the operator $S_l$ commutes with the map $\sigma\colon \h(t)\mapsto
J\h(t+T)$.
Hence there exists a joint basis of eigenfunctions for $S_l$ and $\sigma$. Since
$\sigma^{2q}=\id$, we obtain that each $\sigma$-eigenvalue $\omega$ satisfies
$\omega^{2q}=1$ and hence equals $\omega_0^r$ for some $r=0,\ldots,2q-1$.
This proves the first claim.
For the second claim note that if $q$ is even, then an eigenfunction
of~\eqref{eq:short-sl} is $\frac{t_0}2$-periodic if $\omega^q=1$ and
$\frac{t_0}2$-antiperiodic if $\omega^q=-1$.
\end{proof}

Consider the quadratic form, associated with the Sturm-Liouville operator $S_l$
on the segment $[0,T]$,
$$
Q_l[\h]=\int_0^T (p(t)|\h'(t)|^2+\langle\h(t),A(l,t)\h(t)\rangle)dt=
\langle\h(t),p(t)\h'(t)\rangle\Bigr|_0^T+\int_0^T \langle \h(t),S_l\h(t)\rangle dt,
$$
where $\langle\cdot\,,\cdot\rangle$ is the standard Hermitian product in $\C^2$
(linear in first argument).
Let $\Sigma_l$ be the space of solutions of the equation $S_l\h=0$ on the
segment $[0,T]$.
Consider the map
$$
\Sigma_l\to\C^2\oplus\C^2,\quad
\h(t)\mapsto (\h(0),\h(T)).
$$
Since $\dim\Sigma_l=4$, this map is an isomorphism provided that it is injective
or, equivalently, that zero is not an eigenvalue of the Dirichlet problem
\begin{equation}\label{eq:short-dir}
\begin{cases}
S_l\h=\mu\h,\\
\h(0)=\mathbf 0,\quad \h(T)=\mathbf 0.
\end{cases}
\end{equation}
In this case one can define the following quadratic form $\alpha_l$ on
$\C^2\oplus\C^2$
\begin{equation}\label{eq:l12-alpha}
\alpha_l[(\mathbf v_0,\mathbf v_T)]=Q_l[\g]=\langle\g(t),p(t)\g'(t)\rangle\Bigr|_0^T,
\end{equation}
where $\mathbf v_0,\mathbf v_T\in\C^2$ and $\g$ is a unique element of
$\Sigma_l$ such that $\g(0)=\mathbf v_0,\g(T)=\mathbf v_T$.
Let $\mathbf e_1,\mathbf e_2$ be the standard basis of $\C^2$.

The following proposition is our main tool in the study of cases $l=1$ and
$l=2$.
It is a direct consequence of~\cite[Proposition~2.6]{edwards1964generalized}.
Let
$$
\mu_1(l)\leqslant\mu_2(l)\leqslant\ldots\leqslant\mu_k(l)\leqslant\ldots\nearrow+\infty
$$
be the eigenvalues of the problem~\eqref{eq:short-dir}.

\begin{proposition}\label{pr:l12-edw}
Suppose that zero is not an eigenvalue of the problem~\eqref{eq:short-dir}.
Then
\begin{gather*}
\#\{\lambda_k^{[\omega]}(l)<0\}=\#\{\mu_k(l)<0\}+i_l(\omega),\\
\#\{\lambda_k^{[\omega]}(l)=0\}=n_l(\omega),
\end{gather*}
where $i_l(\omega)$ and $n_l(\omega)$ denote, respectively, the index and the
nullity of the quadratic form $\alpha_l$ restricted to a subspace of
$\C^2\oplus\C^2$ spanned by $(\e_1,-\omega\e_1)$ and $(\e_2,\omega\e_2)$.
\end{proposition}

Let $\Gr_l(\omega)$ be the Gram matrix of the restriction of $\alpha_l$ on
$\mathrm{span}\{(\e_1,-\omega\e_1),(\e_2,\omega\e_2)\}$.

\begin{proposition}\label{pr:l12-detgr}
For each $l$ the determinant of $\Gr_l(\omega)$ is a polynomial in $\Re\omega$
with real coefficients of degree at most~$2$.
\end{proposition}

\begin{proof}
Consider the following basis in $\C^2\oplus\C^2\simeq\C^4$:
$$
\epsilon_1=(\e_1,\mathbf 0),\quad
\epsilon_2=(\e_2,\mathbf 0),\quad
\epsilon_3=(\mathbf 0,\e_2),\quad
\epsilon_4=(\mathbf 0,\e_1).
$$
Then $(\e_1,-\omega\e_1)=\epsilon_1-\omega\epsilon_4$ and
$(\e_2,\omega\e_2)=\epsilon_2+\omega\epsilon_3$.
We have
$$
\Gr_l(\omega)=\begin{pmatrix}
a_{11}-\omega a_{41}-\overline\omega a_{14}+a_{44} &
a_{12}-\omega a_{42}+\overline\omega a_{13}-a_{43}\\
a_{21}+\omega a_{31}-\overline\omega a_{24}-a_{34} &
a_{22}+\omega a_{32}+\overline\omega a_{23}+a_{33}
\end{pmatrix},
$$
where $a_{ij}=\alpha_l[\epsilon_i,\epsilon_j]\;(i,j=1,2,3,4)$.
Note that $a_{ij}\in\R$ and $a_{ij}=\overline{a_{ji}}=a_{ji}$ since $S_l$ is a
real operator and the form $\alpha_l$ is Hermitian.
Finally, $a_{ij}=a_{5-i,5-j}$ since the operator $S_l$ is invariant under the
map $t\mapsto T-t$. With all these relations we get
$$
\Gr_l(\omega)=\begin{pmatrix}
2a_{11}-2a_{14}\Re\omega &
-2ia_{13}\Im\omega\\
2ia_{13}\Im\omega &
2a_{22}+2a_{23}\Re\omega
\end{pmatrix}.
$$
Since $|\omega|=1$, the proposition follows.
\end{proof}

Our next step is to consider the limit case $b\to 0$.
In this case we have
$$
\phi(t)\equiv 0,\quad
\theta(t)=\frac{t}{2\pi},\quad
T=\lim_{b\to 0}T(b)=2\pi^2,
$$
and
$$
p(t)\equiv 4\pi^2,\quad
A(l,t)=\begin{pmatrix}
l^2-4 & 0\\
0 & l^2-2
\end{pmatrix}.
$$
Thus the operator $S_l$ splits into a pair of scalar second-order differential
operators with constant coefficients.
In particular, $\mu_k(l)$ and $\Gr_l(\omega)$ can be calculated explicitly.

\begin{proposition}
Let $b=0$. Then
\begin{equation}\label{eq:l12-mub0}
\mu_1(1)=-1,\,\mu_2(1)=1>0,\quad\mu_1(2)=2>0,
\end{equation}
and
\begin{align}
\Gr_1(\omega)&=\begin{pmatrix}
\frac{4\sqrt{3}\pi}{\sin\frac{\sqrt{6}\pi}2}(\cos\frac{\sqrt{6}\pi}2+\Re\omega) & 0\\
0 & \frac{4\pi}{\sin\frac{\sqrt{2}\pi}2}(\cos\frac{\sqrt{2}\pi}2-\Re\omega)
\end{pmatrix},\label{eq:l12-b0gr1}\\
\Gr_2(\omega)&=\begin{pmatrix}
4\sqrt{2}(1+\Re\omega) & 0\\
0 & \frac{4\sqrt{2}\pi}{\sinh\pi}(\cosh\pi-\Re\omega)
\end{pmatrix}.\label{eq:l12-b0gr2}
\end{align}
\end{proposition}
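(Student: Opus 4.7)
The plan is to exploit the decoupling of the system at $b=0$: both $p(t)$ becomes the constant $4\pi^2$ and $Q_l(t)$ becomes the diagonal constant matrix $\operatorname{diag}(l^2-4,\,l^2-2)$, so the sesquilinear form $\Omega_l$ splits as $\Omega_l[g,h]=\Omega_l^{(1)}[g_1,h_1]+\Omega_l^{(2)}[g_2,h_2]$. Accordingly, every solution $h^{(j)}$ of the system~\eqref{eq:sl-omega-eq} (with $\lambda=0$) whose boundary values are given by a basis vector $e_j$ has exactly one nonzero component, and cross pairings vanish. This immediately gives $a_{12}=a_{14}=a_{23}=a_{34}=0$, so the Gram matrix $A_l(\omega)$ is diagonal, and only the four entries $a_{11},a_{13},a_{22},a_{24}$ need to be computed.

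Next, using the identity $\Omega_l[g,h]=\langle g,p\,h'\rangle\bigr|_0^T$, valid whenever $h$ solves the homogeneous system, the remaining entries reduce to evaluating the derivatives at $t=0,T$ of the scalar solutions of the two decoupled two-point boundary value problems
$$
-4\pi^2 h_1''+(l^2-4)h_1=0,\qquad -4\pi^2 h_2''+(l^2-2)h_2=0
$$
on $[0,T]$ with $T=\sqrt{2}\pi^2$. For $l=1$ both equations are oscillatory with frequencies $\frac{\sqrt{3}}{2\pi}$ and $\frac{1}{2\pi}$, whose arguments at $t=T$ become $\frac{\sqrt{6}}{2}\pi$ and $\frac{\sqrt{2}}{2}\pi$; for $l=2$ the first becomes the flat equation $h_1''=0$ and the second is hyperbolic with argument $\kappa T=\pi$. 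In each case I would solve explicitly for the two solutions satisfying $(h(0),h(T))=(1,0)$ and $(0,1)$ respectively, compute $h'(0)$ and $h'(T)$, and then read off $a_{ii}$ and $a_{i,i+2}$.

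Assembling the entries according to the displayed formula
$$
A_l(\omega)=\begin{pmatrix} 2a_{11}+(\omega+\bar\omega)a_{13} & 0\\ 0 & 2a_{22}-(\omega+\bar\omega)a_{24}\end{pmatrix}
$$
then yields, after cancellation of $s_j^2+c_j^2=1$ identities, the four stated expressions; in each diagonal entry the cotangent-type factor $c_j/s_j$ combines with the $\omega+\bar\omega$ contribution to produce the clean factorized form $\cos(\cdots)\pm\Re\omega$ over $\sin(\cdots)$ (or $\sinh$ in the $l=2$, $h_2$ case).

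There is essentially no analytic obstacle here; the work is purely computational. The only points requiring care are (i) correctly tracking the sign convention of the boundary form $\langle g,ph'\rangle\bigr|_0^T$ so that the resulting matrix entries are real (which is automatic because the coefficients of~\eqref{eq:sl-omega-eq} are real and the solutions with real boundary data are real), and (ii) verifying that $\sin\frac{\sqrt{6}}{2}\pi$, $\sin\frac{\sqrt{2}}{2}\pi$ and $\sinh\pi$ are all nonzero so that the Dirichlet problem is indeed uniquely solvable at $\mu=0$, consistent with the values of $\mu_k(l)$ recorded in~\eqref{eq:mu-b0}.
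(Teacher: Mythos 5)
Your proposal is correct and follows essentially the same route as the paper: at $b=0$ the system decouples, so the paper likewise writes down the explicit solutions $\psi_i$ of the two scalar equations (sines with frequencies $\tfrac{\sqrt{3}}{2\pi},\tfrac{1}{2\pi}$ for $l=1$; a linear function and $\sinh$ with $\kappa T=\pi$ for $l=2$) satisfying $(\psi_i(0),\psi_i(T))=e_i$, and evaluates $a_{ij}=\langle\psi_i,p\,\psi_j'\rangle\bigr|_0^T$ before assembling $A_l(\omega)$ from the Gram-matrix formula. Carrying out your outlined endpoint-derivative computation indeed reproduces the stated entries, e.g. $2a_{11}+2\Re\omega\,a_{13}=4\sqrt{3}\pi\cot\frac{\sqrt{6}}{2}\pi-\frac{4\sqrt{3}\pi}{\sin\frac{\sqrt{6}}{2}\pi}\Re\omega=\frac{4\sqrt{3}\pi}{\sin\frac{\sqrt{6}}{2}\pi}\bigl(\cos\frac{\sqrt{6}}{2}\pi-\Re\omega\bigr)$.
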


\begin{proof}
This is a tedious but very straightforward computation.
Let us compute, for example, the first entry of the matrix $\Gr_1(\omega)$.
This entry equals $\alpha_l[(\e_1,-\omega\e_1)]=Q_1[\g]$, where
$$
\g(t)=\frac{1}{\sin\frac{\sqrt 6\pi}2}\begin{pmatrix}
\sin\frac{\sqrt 3(\sqrt 2\pi^2-t)}{2\pi}-\omega\sin\frac{\sqrt 3t}{2\pi} \\ 0
\end{pmatrix}
$$
is a unique solution of the equation $S_l\h=\mathbf 0$ such that
$\g(0)=\e_1,\g(T)=-\omega\e_1$.
We have
$$
Q_1[\g]=\langle\g(t),p(t)\g'(t)\rangle\Bigr|_0^T=
\langle\g(t),4\pi^2\g'(t)\rangle\Bigl|_0^{\sqrt 2\pi^2}=
\frac{4\sqrt{3}\pi}{\sin\frac{\sqrt{6}\pi}2}(\cos\frac{\sqrt{6}\pi}2+\Re\omega).
$$
All other entries of $\Gr_1(\omega)$ and $\Gr_2(\omega)$ are computed similarly.
\end{proof}

In order to pass to the limit $b\to 0$, we need he following technical
proposition.

\begin{proposition}\label{pr:l12-cont}
Fix $l,\omega$ and consider the values $\mu_k(l)$ and (the entries of) the
matrix $\Gr_l(\omega)$ as functions in $b$.
Then these functions are continuous at $b=0$.
\end{proposition}

\begin{proof}
To show continuity of the matrix $\Gr_l(\omega)$ it suffices to establish the
continuity of $a_{ij}$ from the proof of Proposition~\ref{pr:l12-detgr}.
This is essentially the continuity of the solution of a system of ODEs on the
parameter.
Indeed, by~\eqref{eq:l12-alpha} it suffices to check the continuous dependence
on $b$ of $\g'(0)$ and $\g'(T)$, where $\g$ is a unique solution of the equation
$S_l\g=0$ such that $\g(0)=\epsilon_i,\g(T)=\epsilon_j$.
For each $b$ consider a fundamental system of solutions $\h^i(t)$ ($i=1,2,3,4$)
of the equation $S_l\h=0$ with fixed (i.e., independent of $b$) initial
conditions at $t=0$.
Then $\g(t)$ is a linear combination of these fundamental solutions.
Moreover, the coefficients of this linear combinations are rational functions in
$\h^i(T)$.
Thus, the continuity of $\g'(0)$ and $\g'(T)$ follows from the continuity of
$\h^i(T),(\h^i)'(T)$.
This last one is exactly continuous dependence of the solution on the parameter;
see, for example,~\cite[Theorem~7.4 in Chapter~1]{coddington1995theory}.
The continuity of $\mu_k(l)$ follows from~\cite[Theorem~6.1]{hu2019singularity}.
\end{proof}

\begin{proof}[Proof of Proposition~\ref{pr:l12}]
\textit{Case $l=1$.} Let $b$ be sufficiently close to zero.
Then by~\eqref{eq:l12-mub0} and continuity of $\mu_k(1)$ at $b=0$ we have
$\mu_1(1)<0$ and $\mu_2(1)>0$ so that
\begin{equation}\label{eq:l12-l1mu}
\#\{\mu_k(1)<0\}=1.
\end{equation}
Recall from Proposition~\ref{pr:l12-detgr} that $\det\Gr_1(\omega)$ is a
quadratic polynomial in $\Re\omega$ with real coefficients.
For $b=0$, the roots of this polynomial are $\cos\frac{\sqrt 2\pi}2$ and
$-\cos\frac{\sqrt 6\pi}2$ by~\eqref{eq:l12-b0gr1}.
By continuity of $\Gr_1(\omega)$ at $b=0$ (Proposition~\ref{pr:l12-cont}), the
polynomial $\det\Gr_1$ has two real roots $s_1$ and $s_2$, which are close to
$\cos\frac{\sqrt 2\pi}2$ and $-\cos\frac{\sqrt 6\pi}2$ respectively.
Further, it follows from Proposition~\ref{pr:esec}(i) that the complex-valued
vector function
$$
e^{i\theta}\begin{pmatrix}
\sin\phi\\
2\pi\cos\phi(\sin\phi\,\dot\phi-i\cos\phi\,\dot\theta)
\end{pmatrix}
$$
is a vector eigenfunction of the problem~\eqref{eq:short-sl} with
$l=1,\lambda=0,\omega=\omega_0^p$, where $\omega_0=e^\frac{\pi i}q$.
This means that in fact
\begin{equation}\label{eq:l12-l1s1}
s_1=\Re\omega_0^p=\Re\omega_0^{-p}.
\end{equation}
Also note that $|\cos\frac{\sqrt 2\pi}2|<-\cos\frac{\sqrt 6\pi}2$ and hence
\begin{equation}\label{eq:l12-l1s1s2}
s_2>|s_1|=\Re\omega_0^{q-p}\quad\text{(for $b$ close to zero)}.
\end{equation}
In the notation of Propositions~\ref{pr:l12-short} and~\ref{pr:l12-edw} we have
\begin{equation}\label{eq:l12-l1in}
i_1(\omega)=\begin{cases}
0,&\Re\omega\in[-1,s_1],\\
1,&\Re\omega\in(s_1,s_2],\\
2,&\Re\omega\in(s_2,1],
\end{cases}\quad
n_1(\omega)=\begin{cases}
1,&\Re\omega=s_1,s_2,\\
0,&\text{otherwise},
\end{cases}
\end{equation}
and
\begin{align*}
\#\{\lambda_k(1)<0\}&=\sum_{r=0}^{2q-1}\#\{\lambda_k^{[\omega_0^r]}(1)<0\}
&&\text{(by Proposition~\ref{pr:l12-short})}\\
&=\sum_{r=0}^{2q-1}\#\{\mu_k(1)<0\}+i_1(\omega_0^r)
&&\text{(by Proposition~\ref{pr:l12-edw})}\\
&=2q+\#\{r\mid\Re\omega_0^r\in(s_1,1]\}+\#\{r\mid\Re\omega_0^r\in(s_2,1]\}
&&\text{(by~\eqref{eq:l12-l1mu} and~\eqref{eq:l12-l1in})}\\
&=2q+2p-1+\#\{r\mid\Re\omega_0^r\in(s_2,1]\}
&&\text{(by~\eqref{eq:l12-l1s1})}\\
&\in [2q+2p-1,4q-1],
&&\text{(by~\eqref{eq:l12-l1s1s2})},
\end{align*}
and similarly
$$
\#\{\lambda_k(1)=0\}=
\sum_{r=0}^{2q-1}\#\{\lambda_k^{[\omega_0^r]}(1)=0\}=
\sum_{r=0}^{2q-1} n_1(\omega_0^r)=
\#\{r\mid\Re\omega_0^r=s_1\text{ or }s_2\}\in [2,4].
$$
For even $q$ the computations are similar and therefore are omitted.

\textit{Case $l=2$} is similar but easier. By~\eqref{eq:l12-mub0} and continuity
of $\mu_k(2)$ at $b=0$ we have $\mu_1(2)>0$ so that
$$
\#\{\mu_k(2)<0\}=0.
$$
For $b=0$, the roots of this polynomial are $-1$ and $\cosh\pi$
by~\eqref{eq:l12-b0gr2}.
By continuity of $\Gr_2(\omega)$ at $b=0$ (Proposition~\ref{pr:l12-cont}), the
polynomial $\det\Gr_2$ has two real roots $s_1$ and $s_2$, which are close to
$-1$ and $\cosh\pi$ respectively.
It follows from Proposition~\ref{pr:esec}(i) that the vector function
$$
\begin{pmatrix}
\cos\phi\\
2\pi\cos^2\phi\,\dot\phi
\end{pmatrix}
$$
is a vector eigenfunction of the problem~\eqref{eq:short-sl} with
$l=2,\lambda=0,\omega=-1$.
This means that in fact $s_1=-1$. Since the second root $s_2$ is close to
$\cosh\pi>1$, we obtain
$$
i_2(\omega)=0,\quad
n_2(\omega)=\begin{cases}
1,&\omega=-1,\\
0,&\text{otherwise},
\end{cases}
$$
and finally
\begin{align*}
\#\{\lambda_k(2)<0\}&=
\sum_{r=0}^{2q-1}\#\{\lambda_k^{[\omega_0^r]}(2)<0\}=
\sum_{r=0}^{2q-1}\#\{\mu_k(2)<0\}+i_2(\omega_0^r)=0,\\
\#\{\lambda_k(2)=0\}&=
\sum_{r=0}^{2q-1}\#\{\lambda_k^{[\omega_0^r]}(2)<0\}=
\sum_{r=0}^{2q-1} n_2(\omega_0^r)=1.
\end{align*}
Again, we omit the case of even $q$ since it is similar.
\end{proof}

\smallskip

\begin{proof}[Proof of Theorem~\ref{th:intro-main}]
Let $b$ be small enough as required in Proposition~\ref{pr:l12}.
Let $q$ be odd. By Propositions~\ref{pr:sep-ind},~\ref{pr:sep-l3},~\ref{pr:l0},
and~\ref{pr:l12} we have
\begin{align*}
\Ind(\widetilde O_{p,q})&=\#\{\lambda_k(0)<0\}+2\#\{\lambda_k(1)<0\}+2\#\{\lambda_k(2)<0\}\in [6q+8p-3,10q+4p-5],\\
\Nul(\widetilde O_{p,q})&=\#\{\lambda_k(0)=0\}+2\#\{\lambda_k(1)=0\}+2\#\{\lambda_k(2)=0\}\in [9,13].
\end{align*}
If $q$ is even, then, similarly,
\begin{align*}
\Ind(\widetilde O_{p,q})&=\#\{\lambda_k^+(0)<0\}+2\#\{\lambda_k^-(1)<0\}+2\#\{\lambda_k^+(2)<0\}\in [3q+4p-3,5q+2p-5],\\
\Nul(\widetilde O_{p,q})&=\#\{\lambda_k^+(0)=0\}+2\#\{\lambda_k^-(1)=0\}+2\#\{\lambda_k^+(2)=0\}\in [9,13].
\end{align*}
\end{proof}

\begin{remark}[Discussion]\label{rem:l12-disc}
It seems that any exact computation of $\Ind(\widetilde O_{p/q})$ for $b$ close
to zero should contain a very accurate control over the root $s_2$ from the
proof of Proposition~\ref{pr:l12}.
This root (in contrast to $s_1$, which comes from a Jacobi field) has not any
clear geometric meaning and hardly can be calculated exactly.
However, numerical experiments in Wolfram Mathematica suggest that when $b$
varies from~0 to $-\pi/2$, the root $s_2$ increases and passes through~1 at some
moment.
More precisely, we have the following conjecture.

\begin{conjecture}\label{con:l12-disc}
There exists $r_0\in\left(2/3,\sqrt 2/2\right)$ such that for each
$p/q\in\left(1/2,r_0\right)$ one has
$$
\Ind(\widetilde O_{p/q})=\begin{cases}
6q+8p-3,&\text{$q$ is odd;}\\
3q+4p-3,&\text{$q$ is even;}
\end{cases}
\quad
\Nul(\widetilde O_{p/q})=9.
$$
In particular,
$$
\Ind(\widetilde O_{2/3})=31,\quad
\Nul(\widetilde O_{2/3})=9.
$$
\end{conjecture}

One can try to verify this conjecture at least for $b$ close to $-\pi/2$ using
the same method as in the proof of Theorem~\ref{th:intro-main}.
The problem here is that the limit matrix Sturm-Liouville problem in this case
is somewhat very singular: the function $p(t)$ vanishes at the ends of the
interval and the matrix potential $A(l,t)$ contains a Dirac delta function.
It is not clear how one should treat the eigenvalues of this problem.

Finally, let us note that similar difficulties appear when one tries to compute
the indices of some similar codimension~2 minimal surfaces.
For example, the indices of bipolar Lawson $\tau$-surfaces $\tilde\tau_{m,k}$ in
$\Sph^4$ and Fraser-Sargent free boundary minimal surfaces $IFS_{k,l}$ in
$\mathbb B^4$ are known only for $m=3,k=1$ (see~\cite[proof of
Corollary~6.4]{karpukhin2021stability}) and $k=2,l=1$
(see~\cite{medvedev2023critical,medvedev2023index}) respectively.
\end{remark}

\bibliography{English}

\begin{thebibliography}{HLWZ19}

\bibitem[CL95]{coddington1995theory}
Earl~A. Coddington and Norman Levinson.
\newblock {\em Theory of ordinary differential equations}.
\newblock McGraw-Hill Book Company, Inc., New York-Toronto-London, 1995.

\bibitem[Edw64]{edwards1964generalized}
Harold~M. Edwards.
\newblock A generalized {Sturm} theorem.
\newblock {\em Annals of Mathematics}, 80(1):22--57, 1964.

\bibitem[EI08]{elsoufi2008laplacian}
Ahmad {El Soufi} and Sa\"{\i}d Ilias.
\newblock Laplacian eigenvalue functionals and metric deformations on compact manifolds.
\newblock {\em Journal of Geometry and Physics}, 58(1):89--104, 2008.

\bibitem[EM08]{ejiri2008comparison}
Norio Ejiri and Mario Micallef.
\newblock Comparison between second variation of area and second variation of energy of a minimal surface.
\newblock {\em Advances in Calculus of Variations}, 1(3):223--239, 2008.

\bibitem[HLJ71]{hsiang1971minimal}
Wu-Yi Hsiang and H.~Blaine Lawson~Jr.
\newblock Minimal submanifolds of low cohomogeneity.
\newblock {\em Journal of Differential Geometry}, 5(1-2):1--38, 1971.

\bibitem[HLWZ19]{hu2019singularity}
Xijun Hu, Lei Liu, Li~Wu, and Hao Zhu.
\newblock Singularity of the $n$-th eigenvalue of high dimensional {Sturm--Liouville} problems.
\newblock {\em Journal of Differential Equations}, 266(7):4106--4136, 2019.

\bibitem[Kar14]{karpukhin2014spectral}
Mikhail Karpukhin.
\newblock Spectral properties of bipolar surfaces to {Otsuki} tori.
\newblock {\em Journal of Spectral Theory}, 4(1):87--111, 2014.

\bibitem[Kar15]{karpukhin2015spectral}
Mikhail Karpukhin.
\newblock Spectral properties of a family of minimal tori of revolution in five-dimensional sphere.
\newblock {\em Canadian Mathematical Bulletin}, 58(2):285--296, 2015.

\bibitem[Kar21]{karpukhin2021index}
Mikhail Karpukhin.
\newblock Index of minimal spheres and isoperimetric eigenvalue inequalities.
\newblock {\em Inventiones mathematicae}, 223(1):335--377, 2021.

\bibitem[KNPS21]{karpukhin2021stability}
Mikhail Karpukhin, Micka\"el Nahon, Iosif Polterovich, and Daniel Stern.
\newblock Stability of isoperimetric inequalities for laplace eigenvalues on surfaces.
\newblock {\em arXiv:2106.15043v1}, 2021.

\bibitem[LJ70]{lawson1970complete}
H.~Blaine Lawson~Jr.
\newblock Complete minimal surfaces in {$S^3$}.
\newblock {\em Annals of Mathematics}, 92(3):335--374, 1970.

\bibitem[Med21]{medvedev2023critical}
Vladimir Medvedev.
\newblock On the index of critical {M{\"o}bius} band in {$\mathbb B^4$}.
\newblock {\em Journal of Geometric Analysis}, 33:93, 2021.

\bibitem[MM22]{medvedev2023index}
Vladimir Medvedev and Egor Morozov.
\newblock On the index of {Fraser-Sargent} minimal surfaces.
\newblock {\em arXiv:2204.07972}, 2022.

\bibitem[MP23]{morozov2023index}
Egor Morozov and Alexei~V. Penskoi.
\newblock Index of minimal surfaces in the 3-sphere.
\newblock {\em Russian Math. Surveys}, 78(2):396--398, 2023.

\bibitem[Nad96]{nadirashvili1996isoperimetric}
Nikolai Nadirashvili.
\newblock Berger's isoperimetric problem and minimal immersions of surfaces.
\newblock {\em Geometric and Functionsl Analysis}, 6:877--897, 1996.

\bibitem[{\^O}ts70]{otsuki1970minimal}
Tominosuke {\^O}tsuki.
\newblock Minimal hypersurfaces in a {Riemannian} manifold of constant curvature.
\newblock {\em American Journal of Mathematics}, 92(1):145--173, 1970.

\bibitem[Pen12]{penskoi2012extremal}
Alexei~V. Penskoi.
\newblock Extremal spectral properties of {Lawson} tau-surfaces and the {Lam{\'e}} equation.
\newblock {\em Moscow Mathematical Journal}, 12(1):173--192, 2012.

\bibitem[Pen13a]{penskoi2013metrics}
Alexei~V. Penskoi.
\newblock Extremal metrics for the eigenvalues of the {Laplace-Beltrami} operator on surfaces.
\newblock {\em Uspekhi Mat. Nauk}, 68(6(414)):107--168, 2013.

\bibitem[Pen13b]{penskoi2013extremal}
Alexei~V. Penskoi.
\newblock Extremal spectral properties of {Otsuki} tori.
\newblock {\em Mathematische Nachrichten}, 286(4):379--391, 2013.

\bibitem[Pen15]{penskoi2015generalized}
Alexei~V. Penskoi.
\newblock Generalized {Lawson} tori and {Klein} bottles.
\newblock {\em J. Geom. Anal.}, 25(4):2645--2666, 2015.

\bibitem[Pen19]{penskoi2019isoperimetric}
Alexei~V. Penskoi.
\newblock Isoperimetric inequalities for higher eigenvalues of the {Laplace-Beltrami} operator on surfaces.
\newblock {\em Tr. Mat. Inst. Steklova}, 305(Algebraicheskaya Topologiya Kombinatorika i Matematicheskaya Fizika):291--308, 2019.

\bibitem[Sim68]{simons1968minimal}
James Simons.
\newblock Minimal varieties in {Riemannian} manifolds.
\newblock {\em Annals of Mathematics}, 88(1):62--105, 1968.

\end{thebibliography}
\bibliographystyle{alpha}

\end{document}